\numberwithin{equation}{section}
\newtheorem{theorem}{Theorem}[section]
\newtheorem{proposition}[theorem]{Proposition}
\newtheorem{corollary}[theorem]{Corollary}
\newtheorem{lemma}[theorem]{Lemma}
\theoremstyle{definition}
\newtheorem{definition}[theorem]{Definition}
\newtheorem{remark}[theorem]{Remark}
\newtheorem{example}[theorem]{Example}
\newcommand{\inter}{{\rm int}}
\newcommand{\N}{{\mathbb N}}
\newcommand{\R}{{\mathbb R}}
\begin{document}

\title[The Dirichlet problem on the Sierpinski gasket ]
{On the existence of three solutions for the Dirichlet problem on the Sierpinski gasket}
\author[B. E. Breckner]{Brigitte E. Breckner}
\address{Babe\c{s}-Bolyai University, Faculty of Mathematics and Computer Science\\
Kog\u{a}lniceanu str. 1\\
400084 Cluj-Napoca, Romania} \email{breckner@gmx.net}

\author[D. Repov\v{s}]{Du\v{s}an Repov\v{s}}
\address{Institute of Mathematics, Physics and Mechanics\\
P.O.Box 2964, SI-1001 Ljubljana}
\email{dusan.repovs@guest.arnes.si}

\author[Cs. Varga]{Csaba Varga}
\address{Babe\c{s}-Bolyai University, Faculty of Mathematics and Computer Science\\
Kog\u{a}lniceanu str. 1\\
400084 Cluj-Napoca, Romania} \email{varga\_gy\_csaba@yahoo.com}

\date{}

\begin{abstract}
We apply a recently obtained  three critical points theorem of B. Ricceri to prove the existence of at least three solutions of certain two-parameters Dirichlet
problems defined on the Sierpinski gasket. We also show the existence of at least three nonzero solutions of certain perturbed two-parameters Dirichlet problems
on the Sierpinski gasket, using both the mountain pass theorem of Ambrosetti-Rabinowitz and that of Pucci-Serrin.
\end{abstract}

\subjclass[2010]{Primary 35J20; Secondary 28A80, 35J25, 35J60,
47J30, 49J52}

\keywords{Sierpinski gasket, weak Laplacian, Dirichlet problem
on the Sierpinski gasket, weak solution, critical point, minimax theorems, mountain pass theorems}

\maketitle

\section{Introduction}
The celebrated three critical points theorem obtained by Ricceri in \cite{Ri00} turned out to be one of the most often applied abstract multiplicity results
for the study of different types of nonlinear problems of variational nature. In this sense we refer to the references listed in \cite{Ri09r}. Also, this three
critical points theorem has been extended to certain classes of non-smooth functions (see, for example, \cite{ArCa}, \cite{BoCa}, \cite{MaMo}). Ricceri has 
published both a revised form of his three critical points theorem (\cite{Ri09r}) and a refinement of it (\cite{Ri09f}). A corollary of the latter, stated
also in \cite{Ri09f}, is the following result:

\begin{theorem}\label{Ricceri3cp}
Let $X$ be a separable and reflexive real Banach space, and $\Phi, J\colon X\to\R$ functionals satisfying the following conditions:
\begin{itemize}
\item[{\rm (i)}] $\Phi$ is a coercive, sequentially weakly lower semicontinuous
$C^1$-functional, bounded on each bounded subset of $X$, and whose derivative admits a continuous inverse on $X^*$.
\item[{\rm (ii)}] If $(u_n)$ is a sequence in $X$ converging weakly to $u$, and if $\displaystyle{\liminf_{n\to\infty}\Phi(u_n)\leq \Phi(u)}$, then $(u_n)$ has a
subsequence converging strongly to $u$.
\item[{\rm (iii)}] $J$ is a $C^1$-functional with compact derivative.
\item[{\rm (iv)}] The functional $\Phi$ has a strict local minimum $u_0$ with $\Phi(u_0)=J(u_0)=0$.
\item[{\rm (v)}] The inequality $\rho_1<\rho_2$ holds, where
$$\rho_1:=\max\left\{0,\, \limsup_{||u||\to\infty} \frac{J(u)}{\Phi(u)},\, \limsup_{u\to u_0}\frac{J(u)}{\Phi(u)}\right\}\hbox { and }
\rho_2:=\sup_{u\in\Phi^{-1}(]0,\infty[)}\frac{J(u)}{\Phi(u)}.$$
\end{itemize}
Then, for each compact interval $[\lambda_1,\lambda_2]\subset]\frac1{\rho_2},\frac1{\rho_1}[$ {\rm (}where, by convention, $\frac10:=\infty$ and
$\frac1\infty=0${\rm)},
there exists a positive real number $r$ with the following property: For every $\lambda\in [\lambda_1,\lambda_2]$ and for every $C^1$-functional
$\Psi\colon X\to\R$ with compact derivative there exists $\delta>0$ such that, for every $\eta\in[0,\delta]$, the equation
$$\Phi'(u)=\lambda J'(u)+\eta\Psi'(u)$$
has at least three solutions in $X$ whose norms are less than $r$.
\end{theorem}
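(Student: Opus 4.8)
The plan is to study the family of $C^1$ energy functionals $I_{\lambda,\eta}:=\Phi-\lambda J-\eta\Psi$, whose critical points are exactly the solutions of $\Phi'(u)=\lambda J'(u)+\eta\Psi'(u)$, and to show that for $\eta=0$ each $I_{\lambda,0}$ with $\lambda\in[\lambda_1,\lambda_2]$ has the geometry of a functional possessing two distinct local minima, and then to propagate this structure to small $\eta$. First I would record that $I_{\lambda,0}$ is coercive: since $\limsup_{\|u\|\to\infty}J(u)/\Phi(u)\le\rho_1<1/\lambda_2\le 1/\lambda$, one gets $\Phi(u)-\lambda J(u)\ge(1-\lambda\rho_1-o(1))\Phi(u)\to\infty$ using the coercivity of $\Phi$ in (i); together with the sequential weak lower semicontinuity of $\Phi$ and the sequential weak continuity of $J$ (a consequence of the compactness of $J'$ in (iii)), this yields a global minimizer $u_\lambda$ of $I_{\lambda,0}$. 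By condition (v) there is $\bar u\in\Phi^{-1}(]0,\infty[)$ with $J(\bar u)/\Phi(\bar u)>1/\lambda$, hence $I_{\lambda,0}(\bar u)<0=I_{\lambda,0}(u_0)$, so $I_{\lambda,0}(u_\lambda)<0$ and in particular $u_\lambda\ne u_0$. On the other hand, since $\limsup_{u\to u_0}J(u)/\Phi(u)\le\rho_1<1/\lambda$ and, by (iv), $u_0$ is a strict local minimum of $\Phi$ with $\Phi(u_0)=J(u_0)=0$, for $u$ near $u_0$ with $u\ne u_0$ we have $\Phi(u)>0$ and $\lambda J(u)<\Phi(u)$, whence $I_{\lambda,0}(u)>0=I_{\lambda,0}(u_0)$; thus $u_0$ is a strict local minimum of $I_{\lambda,0}$ as well.

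The next step is the Palais--Smale condition for $I_{\lambda,\eta}$ with $\eta\ge0$ small. Any PS sequence $(u_n)$ is bounded by the coercivity just established (uniformly for $\lambda$ in the compact interval and $\eta$ small), so along a subsequence $u_n\rightharpoonup u$; from $\Phi'(u_n)=\lambda J'(u_n)+\eta\Psi'(u_n)+o(1)$ and the compactness of $J'$ and $\Psi'$, the right-hand side converges strongly, so the continuous invertibility of $\Phi'$ in (i) forces strong convergence of $(u_n)$ (condition (ii) applies just as well). Having coercivity (hence boundedness below), PS, and two distinct local minima $u_0$ and $u_\lambda$ in hand, I would invoke the classical consequence of the mountain pass theorem that a $C^1$ functional on a Banach space which is bounded below, satisfies PS, and has two distinct local minima possesses a third critical point $v_\lambda$ of mountain-pass type with $I_{\lambda,0}(v_\lambda)\ge 0$. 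This gives at least three critical points of $I_{\lambda,0}$ for every $\lambda\in[\lambda_1,\lambda_2]$.

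It remains to extract a uniform radius $r$ and to handle the perturbation. The coercivity estimate is uniform over $[\lambda_1,\lambda_2]$, so the minimizers stay in a fixed ball; bounding the mountain-pass critical points uniformly requires a uniform control of the mountain-pass levels, which follows from the continuity of $\lambda\mapsto\inf_X I_{\lambda,0}$ and of the local-minimum geometry in $\lambda$, giving a common radius $r_0$. For the perturbation one argues by stability: for $\eta\in[0,\delta]$ with $\delta$ small, the $C^1$ term $\eta\Psi$, having compact derivative, is a small perturbation on the bounded region of interest that destroys neither the strict-local-minimum property near $u_0$, nor the negativity of the global minimum, nor PS; a continuity/deformation argument then produces three critical points of $I_{\lambda,\eta}$ in the ball of radius $r>r_0$, with $\delta$ chosen uniformly over the compact interval.

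The main obstacle is precisely this quantitative uniformity: producing a single $r$ valid on all of $[\lambda_1,\lambda_2]$ together with a single $\delta$ for the perturbation, with genuine a priori control on the mountain-pass critical points. A one-shot application of the mountain pass theorem does not by itself deliver these uniform bounds; the clean route is Ricceri's minimax machinery --- the fine analysis of $\lambda\mapsto\inf_X(\Phi-\lambda J)$, its local minima, and the associated deformation lemmas --- which is exactly the content of the refinement in \cite{Ri09f} of which this statement is a corollary. Accordingly, a fully self-contained proof would essentially reconstruct that argument, and here we simply appeal to it.
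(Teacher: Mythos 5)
The paper gives no proof of this statement at all: it is imported verbatim as a corollary of Ricceri's further three critical points theorem, stated in \cite{Ri09f}, so your closing appeal to that reference coincides with the paper's own treatment. Your preliminary sketch (coercivity of $\Phi-\lambda J$, the two local minima, the Palais--Smale argument via compactness of $J'$, $\Psi'$ and the continuous inverse of $\Phi'$) is sound as far as it goes, but, as you yourself point out, it does not produce the uniform radius $r$ over $[\lambda_1,\lambda_2]$ nor the uniform $\delta$ for the perturbation $\eta\Psi$; that quantitative content is exactly what the cited theorem of Ricceri supplies, and deferring to it is what the paper does.
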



In the present paper we show with the aid of Theorem \ref{Ricceri3cp} that, under suitable assumptions on the functions
$f,g\colon V\times\R\to\R$, the following two-parameters Dirichlet problem defined on the Sierpinski
gasket $V$ in $\R^{N-1}$ has at least three solutions
$$(DP_{\lambda,\eta})
\left\{
\begin{array}{l}
-\Delta u(x)=\lambda f(x,u(x))+\eta g(x,u(x)),\ \forall x\in V\setminus V_0,\\
\\
u|_{V_0}=0.
\end{array}
\right.
$$
So far we know, this would be the first application of a Ricceri type three critical points theorem to nonlinear partial differential equations on fractals.
(Among the contributions to the theory of nonlinear elliptic equations on fractals we mention \cite{Breradvar}, \cite{Fa99}, \cite{FaHu}, \cite{Hu},
\cite{HuaZh}, \cite{stripaper}).

We also study, in a particular case, a perturbed version of problem $(DP_{\lambda,\eta})$. A similar problem, but involving the $p$-Laplacian,
 has been recently investigated in \cite{An}.
\medskip

\noindent {\bf Notations.} We denote by $\N$ the set of natural
numbers $\{0,\, 1,\, 2,\dots\}$, by $\N^*:=\N\setminus\{0\}$ the set
of positive naturals, and by $|\cdot|$ the Euclidian norm on the
spaces $\R^n$, $n\in\N^*$.

If $X$ is a topological space and $M$ a subset of it, then $\overline M$ and $\partial M$ denote
the closure, respectively, the boundary of $M$.

If $X$ is a normed space and $r$ a positive real, then $B_r$ stands for the open ball
with radius $r$ centered at the origin.

\section{The Sierpinski gasket}

In its initial representation that goes back to the pioneering
papers of the Polish mathematician Waclaw Sierpinski (1882--1969),
the {\it Sierpinski gasket} is the connected subset of the plane
obtained from an equilateral triangle by removing the open middle
inscribed equilateral triangle of $4^{-1}$ the area, removing the
corresponding open triangle from each of the three constituent
triangles, and continuing this way. The gasket can also be obtained
as the closure of the set of vertices arising in this construction.
Over the years, the Sierpinski gasket showed both to be
extraordinarily useful in representing roughness in nature and man's
works. We refer to \cite{stri} for an elementary
introduction to this subject and to  \cite{stribook} for
important applications to differential equations on fractals.

We now rigorously describe the construction of the Sierpinski gasket
in a general setting. Let $N\geq2$ be a natural number and let
$p_1,\dots, p_N\in\R^{N-1}$ be so that $|p_i-p_j|=1$ for $i\neq j$.
Define, for every $i\in\{1,\dots,N\}$, the map
$S_i\colon\R^{N-1}\to\R^{N-1}$ by
$$S_i(x)=\frac12\,x+\frac12\,p_i\,.$$
Obviously every $S_i$ is a similarity with ratio $\frac12$. Let
${\mathcal S}:=\{S_1,\dots, S_N\}$ and denote by $F\colon{\mathcal
P}(\R^{N-1})\to{\mathcal P}(\R^{N-1})$ the map assigning to a subset
$A$ of $\R^{N-1}$ the set
$$F(A)=\bigcup_{i=1}^NS_i(A).$$
It is known (see, for example, Theorem 9.1 in \cite{Fa}) that there
is a unique nonempty compact subset $V$ of $\R^{N-1}$, called the
{\it attractor of the family} ${\mathcal S}$, such that $F(V)=V$
(that is, $V$ is a fixed point of the map $F$). The set $V$ is
called the {\it Sierpinski gasket} (SG for short) in $\R^{N-1}$. It
can be constructed inductively as follows: Put
$V_0:=\{p_1,\dots,p_N\}$, $V_m:=F(V_{m-1})$, for $m\geq1$, and
$V_*:=\cup_{m\geq0}V_m$. Since $p_i=S_i(p_i)$ for
$i=\overline{1,N}$, we have $V_0\subseteq V_1$, hence $F(V_*)=V_*$.
Taking into account that the maps $S_i$, $i=\overline{1,N}$, are
homeomorphisms, we conclude that $\overline{V_*}$ is a fixed point
of $F$. On the other hand, denoting by $C$ the convex hull of the
set $\{p_1,\dots, p_N\}$, we observe that $S_i(C)\subseteq C$ for
$i=\overline{1,N}$. Thus $V_m\subseteq C$ for every $m\in\N$, so
$\overline{V_*}\subseteq C$. It follows that $\overline{V_*}$ is
nonempty and compact, hence $V=\overline{V_*}$. In the sequel $V$
is considered to be endowed with the relative topology induced from
the Euclidean topology on $\R^{N-1}$. The set $V_0$ is called the
{\it intrinsic boundary} of the SG.

The family ${\mathcal S}$ of similarities satisfies the open set
condition (see pg. 129 in \cite{Fa}) with the interior $\inter\, C$
of $C$. (Note that $\inter\, C\neq\emptyset$ since the points
$p_1,\dots, p_N$ are affine independent.) Thus, by Theorem 9.3 of
\cite{Fa}, the Hausdorff dimension $d$ of $V$ satisfies the equality
$$\sum_{i=1}^N\left(\frac12\right)^d=1,$$
hence $d=\frac{\ln N}{\ln 2}$, and $0<{\mathcal H}^d(V)<\infty$,
where ${\mathcal H}^d$ is the $d$-dimensional Hausdorff measure on
$\R^{N-1}$. Let $\mu$ be the normalized restriction of ${\mathcal
H}^d$ to the subsets of $V$, so $\mu(V)=1$. The following property
of $\mu$ will be important for our investigations
\begin{equation}\label{support}
\mu(B)>0,\hbox{ for every nonempty open subset $B$ of }V.
\end{equation}
In other words, the support of $\mu$ coincides with $V$.
We refer, for example, to \cite{Breradvar} for the proof of
(\ref{support}).

\section{The space $H_0^1(V)$} We retain the notations from the previous section and briefly recall from \cite{FaHu} the following notions
(see also \cite{Hu} and \cite{Ko} for the case $N=3$). Denote by
$C(V)$ the space of real-valued continuous functions on $V$ and by
$$C_0(V):=\{u\in C(V)\mid u|_{V_0}=0\}.$$
The spaces $C(V)$ and $C_0(V)$ are endowed with the usual supremum
norm $||\cdot||_{sup}$. For a function $u\colon V\to\R$ and for $m\in\N$
let
\begin{equation}\label{defWm}
W_m(u)=\left(\frac{N+2}{N}\right)^m\sum_{\underset{|x-y|=2^{-m}}{x,y\in
V_m}}(u(x)-u(y))^2.
\end{equation}
We have $W_m(u)\leq W_{m+1}(u)$ for every natural $m$, so we can put
\begin{equation}\label{defW}
W(u)=\lim_{m\to\infty} W_m(u).
\end{equation}
Define now
$$H_0^1(V):=\{u\in C_0(V)\mid W(u)<\infty\}.$$
It turns out that $H_0^1(V)$ is a dense linear subset of
$L^2(V,\mu)$ (equipped with the usual $||\cdot||_2$ norm). We now
endow $H_0^1(V)$ with the norm
$$||u||=\sqrt{W(u)}.$$
In fact, there is an inner product defining this norm: For $u,v\in
H_0^1(V)$ and $m\in\N$ let
$${\mathcal W}_m(u,v)=\left(\frac{N+2}{N}\right)^m\sum_{\underset{|x-y|=2^{-m}}{x,y\in V_m}}(u(x)-u(y))(v(x)-v(y)).$$
Put
$${\mathcal W}(u,v)=\lim_{m\to\infty} {\mathcal W}_m(u,v).$$
Then ${\mathcal W}(u,v)\in\R$, and $H_0^1(V)$, equipped with the
inner product ${\mathcal W}$ (which obviously induces the norm
$||\cdot||$), becomes a real Hilbert space. Moreover, if $c:=2N+3$, then
\begin{equation}\label{embeddingconstant}
||u||_{sup}\leq c||u||, \hbox{ for every }u\in H_0^1(V),
\end{equation}
and the embedding
\begin{equation}\label{embedding}
(H_0^1(V),||\cdot||)\hookrightarrow (C_0(V),||\cdot||_{sup})
\end{equation}
is compact.

We now state a useful property of the space $H_0^1(V)$ which shows,
together with the facts that $(H_0^1(V),||\cdot||)$ is a Hilbert
space and that $H_0^1(V)$ is dense in $L^2(V,\mu)$, that ${\mathcal
W}$ is a Dirichlet form on $L^2(V,\mu)$. (See, for example, Lemma 3.1
of \cite{Breradvar} for the straightforward proof.)

\begin{lemma}\label{MaMianalogue}
Let $h\colon \R\to\R$ be a Lipschitz mapping with Lipschitz constant
$L\geq0$ and such that $h(0)=0$. Then, for every $u\in H_0^1(V)$, we
have $h\circ u\in H_0^1(V)$ and $||h\circ u||\leq L\cdot||u||$.
\end{lemma}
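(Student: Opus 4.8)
The plan is to reduce everything to a pointwise estimate on the discrete energies $W_m$ and then pass to the limit. First I would check that $h\circ u$ is an admissible element of $C_0(V)$: since $h$ is Lipschitz it is in particular continuous, so $h\circ u\in C(V)$ whenever $u\in C(V)$; and since $h(0)=0$ together with $u|_{V_0}=0$, we get $(h\circ u)|_{V_0}=0$, so that $h\circ u\in C_0(V)$.

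Next, fix $m\in\N$ and compare $W_m(h\circ u)$ with $W_m(u)$ term by term. For arbitrary $x,y\in V_m$ the Lipschitz condition on $h$ gives $(h(u(x))-h(u(y)))^2\le L^2\,(u(x)-u(y))^2$; summing this over all pairs $x,y\in V_m$ with $|x-y|=2^{-m}$ and multiplying by $\left(\frac{N+2}{N}\right)^m$, the definition (\ref{defWm}) yields at once $W_m(h\circ u)\le L^2\,W_m(u)$.

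Finally I would let $m\to\infty$. The monotonicity $W_m(\cdot)\le W_{m+1}(\cdot)$ recorded just after (\ref{defWm}) is valid for arbitrary real-valued functions on $V$, hence in particular for $h\circ u$, so $W(h\circ u)=\lim_{m\to\infty}W_m(h\circ u)$ is a well-defined element of $[0,\infty]$; combining this with the term-by-term bound just obtained and with (\ref{defW}), we get $W(h\circ u)\le L^2\,W(u)<\infty$. Thus $h\circ u\in H_0^1(V)$, and taking square roots gives $\|h\circ u\|=\sqrt{W(h\circ u)}\le L\sqrt{W(u)}=L\,\|u\|$.

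There is no real obstacle in this argument; the single point deserving care is to ensure that $W(h\circ u)$ is legitimately defined before one bounds it, which is precisely why one must invoke the monotonicity of $m\mapsto W_m(h\circ u)$ for a general function on $V$ rather than only for elements of $H_0^1(V)$. Everything else is the obvious term-by-term comparison between the two Dirichlet sums.
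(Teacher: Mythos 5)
Your argument is correct and is precisely the ``straightforward proof'' the paper delegates to Lemma 3.1 of \cite{Breradvar}: the Lipschitz condition gives the term-by-term bound $W_m(h\circ u)\le L^2 W_m(u)$, and monotonicity of $m\mapsto W_m$ (valid for arbitrary functions on $V$, as noted after (\ref{defWm})) lets you pass to the limit, while $h(0)=0$ and continuity of $h$ ensure $h\circ u\in C_0(V)$. Nothing is missing; this is the same approach.
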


\section{The Dirichlet problem on the Sierpinski gasket} Keep the notations from the previous sections.
We also recall from \cite{FaHu} (respectively, from \cite{Hu} and
\cite{Ko} in the case $N=3$) that one can define in a standard way a bijective,
linear, and self-adjoint operator $\Delta\colon D\to L^2(V,\mu)$, where
$D$ is a linear subset of $H_0^1(V)$ which is dense in $L^2(V,\mu)$
(and dense also in $(H_0^1(V),||\cdot||)$), such that
$$-{\mathcal W}(u,v)=\int_V\Delta u\cdot vd\mu,\hbox{ for every }(u,v)\in D\times H_0^1(V).$$
The operator $\Delta$ is called the {\it weak Laplacian on} $V$.

\begin{remark}\label{separable}
Theorem 19.B of \cite{Zevol2}, applied to $\Delta^{-1}\colon L^2(V,\mu)\to L^2(V,\mu)$, yields in particular that $H_0^1(V)$ is separable
(see also sections 19.9 and 19.10 in \cite{Zevol2}).
\end{remark}

Given a continuous function $h\colon V\times\R\to\R$, we
can formulate now the following {\it Dirichlet problem on the SG}:
Find  appropriate functions $u\in H_0^1(V)$ (in fact, $u\in D$) such that
$$(P)
\left\{
\begin{array}{l}
-\Delta u(x)=h(x,u(x)),\ \forall x\in V\setminus V_0,\\
\\
u|_{V_0}=0.
\end{array}
\right.
$$
A function $u\in H_0^1(V)$ is called a {\it weak solution of} $(P)$
if
$${\mathcal W}(u,v)-\int_Vh(x,u(x))v(x)d\mu=0,\ \forall v\in H_0^1(V).$$

\begin{remark}
Using the regularity result Lemma 2.12 of \cite{FaHu}, it follows that every weak solution of problem $(P)$ is actually a strong solution
(as defined in \cite{FaHu}). For this reason we will call in the sequel weak solutions of problem (P) simply {\it solutions of problem} $(P)$.
\end{remark}

Before defining the energy functional attached to problem $(P)$ we
recall a few basic notions.

\begin{definition}
Let $E$ be a real Banach space and  $T\colon E\to\R$ a functional.

(1) We say that $T$ is {\it Fr\'{e}chet differentiable at} $u\in E$ if
there exists a continuous linear map $T'(u)\colon E\to\R$, called
the {\it Fr\'{e}chet differential of $T$ at $u$},  such that
$$\lim_{v\to 0}\frac{|T(u+v)-T(u)-T'(u)(v)|}{||v||}=0.$$
The functional $T$ is {\it Fr\'{e}chet differentiable {\rm(}on $E${\rm)}} if $T$
is Fr\'{e}chet differentiable at every point $u\in E$. In this case the mapping $T'\colon E\to E^*$
assigning to each point $u\in E$ the Fr\'{e}chet differential of $T$ at $u$
is called the {\it Fr\'{e}chet derivative}, or, shortly, the {\it derivative}
 of $T$ on $E$. If $T'\colon E\to E^*$ is continuous, then $T$ is called
a $C^1$-{\it functional}.

(2) If $T$ is Fr\'{e}chet differentiable on $E$, then
a point $u\in
E$ is a {\it critical point of} $T$ if $T'(u)=0$. The value of $I$ at $u$ is then called a {\it critical value of} $I$.
\end{definition}

\begin{remark}\label{useful}
Note that if the Fr\'{e}chet differentiable functional $T\colon E\to\R$ has in $u\in E$ a local
extremum, then $u$
is a critical point of $T$.
\end{remark}

\begin{proposition}\label{compderivative}
Let $h\colon V\times \R\to\R$ be continuous and define $H\colon V\times\R \to\R$ by
$$H(x,t)=\int_0^th(x,\xi)d\xi.$$
Then the mapping $J\colon H_0^1(V)\to\R$ given by
$$J(u)=\int_VH(x,u(x))d\mu$$
satisfies the following properties:
\begin{itemize}
\item[{\rm a)}] $J$ is a $C^1$-functional.
\item[{\rm b)}] Its derivative $J'\colon H_0^1(V)\to (H_0^1(V))^*$ is compact.
\item[{\rm c)}] $J$ is sequentially weakly continuous.
\end{itemize}
\end{proposition}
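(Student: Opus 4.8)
The plan is to exploit the compact embedding (\ref{embedding}) of $H_0^1(V)$ into $C_0(V)$ together with the continuity of $h$ on the compact set $V\times[-R,R]$. First I would record the uniform growth control this gives: for any $r>0$, setting $K_r:=\sup\{|h(x,t)| : x\in V,\ |t|\le cr\}$ (finite by continuity of $h$ and compactness), the estimate (\ref{embeddingconstant}) gives $\|u\|_{sup}\le c\|u\|\le cr$ for $u\in B_r$, so $|H(x,u(x))|\le K_r\,cr$ and $|h(x,u(x))|\le K_r$ uniformly on $B_r$. Hence $J$ is well defined and locally bounded; this is the scaffolding for everything else.

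For part (a), I would show $J$ is Gâteaux differentiable with $J'(u)(v)=\int_V h(x,u(x))v(x)\,d\mu$ and that $J'$ is continuous from $(H_0^1(V),\|\cdot\|)$ to its dual, which upgrades Gâteaux to Fréchet differentiability. The Gâteaux derivative is a standard difference-quotient computation: write $\frac{H(x,u(x)+sv(x))-H(x,u(x))}{s}=\frac1s\int_{u(x)}^{u(x)+sv(x)}h(x,\xi)\,d\xi=h(x,u(x)+\theta(x,s)sv(x))v(x)$ by the mean value theorem, and pass to the limit $s\to0$ under the integral sign, the dominating function being $K_r\|v\|_{sup}$ on the relevant ball (Lebesgue dominated convergence, using that $\mu(V)=1<\infty$). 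For continuity of $J'$: if $u_n\to u$ in $H_0^1(V)$, then by (\ref{embedding}) $u_n\to u$ in $C_0(V)$, so $h(\cdot,u_n(\cdot))\to h(\cdot,u(\cdot))$ uniformly on $V$ (uniform continuity of $h$ on a compact set containing the ranges), whence $\sup_{\|v\|\le1}|(J'(u_n)-J'(u))(v)|\le \|h(\cdot,u_n)-h(\cdot,u)\|_{sup}\cdot c\to0$.

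Parts (b) and (c) then follow from the same compactness. For (b), let $(u_n)$ be bounded in $H_0^1(V)$; by reflexivity pass to a weakly convergent subsequence $u_n\rightharpoonup u$, and by the compact embedding (\ref{embedding}) a further subsequence satisfies $u_n\to u$ in $C_0(V)$. Then, exactly as in the continuity argument above, $\|h(\cdot,u_n)-h(\cdot,u)\|_{sup}\to0$, so $\|J'(u_n)-J'(u)\|_{(H_0^1(V))^*}\le c\,\|h(\cdot,u_n)-h(\cdot,u)\|_{sup}\to0$; thus $(J'(u_n))$ has a strongly convergent subsequence, which is the definition of compactness of $J'$. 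For (c), with the same setup $u_n\to u$ uniformly, so $H(\cdot,u_n(\cdot))\to H(\cdot,u(\cdot))$ uniformly (continuity of $H$ on the compact set $V\times[-M,M]$, $M$ a uniform sup-norm bound for the subsequence), and integrating against the finite measure $\mu$ gives $J(u_n)\to J(u)$; since every subsequence has a further subsequence along which $J(u_n)\to J(u)$, the whole sequence converges, proving sequential weak continuity.

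I do not expect a genuine obstacle here: the entire argument is driven by the single structural fact that $H_0^1(V)\hookrightarrow C_0(V)$ compactly, which reduces all three claims to elementary uniform-continuity and dominated-convergence estimates on the compact space $V$ against the probability measure $\mu$. The only point demanding a little care is the justification of differentiating under the integral sign in (a) — making sure the mean-value intermediate point stays in a fixed compact $t$-interval so that the dominating constant $K_r$ is legitimate — but this is routine given the sup-norm bound from (\ref{embeddingconstant}).
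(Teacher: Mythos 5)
Your argument is correct, and for parts b) and c) it is essentially the paper's: everything is driven by the compact embedding (\ref{embedding}) together with the estimate (\ref{embeddingconstant}), a bounded (resp.\ weakly convergent) sequence is sent to a sequence converging in $(C_0(V),||\cdot||_{sup})$, and the resulting convergence of $h(\cdot,u_n(\cdot))$, resp.\ $H(\cdot,u_n(\cdot))$, is integrated against the finite measure $\mu$. The only differences there are cosmetic: the paper invokes the dominated convergence theorem where you use uniform convergence of $h(\cdot,u_n(\cdot))$ (equivalent here, since $h$ is uniformly continuous on the compact set $V\times[-M,M]$), and in b) the paper does not pass through a weak limit but takes the $C_0(V)$-limit $u$ directly and defines the candidate limit functional $T(v)=\int_V h(x,u(x))v(x)d\mu$ by hand (since that $u$ is only known to lie in $C_0(V)$), whereas your extra weak-convergence step identifies the limit as $J'(u)$ with $u\in H_0^1(V)$; both settle compactness of $J'$. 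The genuine difference is part a): the paper simply cites Proposition 2.19 of Falconer--Hu for the $C^1$ property and the formula $J'(u)(v)=\int_V h(x,u(x))v(x)d\mu$, while you give a self-contained proof (Gâteaux differentiability via the mean value theorem and dominated convergence, then continuity of the Gâteaux derivative via (\ref{embeddingconstant}) and uniform continuity of $h$, which upgrades to Fréchet). Your direct route is standard and correct — note that for the continuity of $J'$ only the continuous embedding is needed, not compactness — and it buys independence from the external reference at the cost of a page of routine estimates; the paper's citation buys brevity.
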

\begin{proof}
a) The proof of Proposition 2.19 in \cite{FaHu} implies that $J$ is a $C^1$-functional and that its derivative $J'\colon H_0^1(V)\to (H_0^1(V))^*$
is given by
$$J'(u)(v)=\int_Vh(x,u(x))v(x)d\mu,\hbox{ for all }u,v\in H_0^1(V).$$

b) To show that $J'$ is compact, pick a bounded sequence $(u_n)$ in $H_0^1(V)$. Since $H_0^1(V)$ is reflexive and since the embedding
(\ref{embedding}) is compact, there exists a subsequence of $(u_n)$ which converges in $(C_0(V),||\cdot||_{sup})$. Without any loss of
generality we can assume that $(u_n)$ converges in $(C_0(V),||\cdot||_{sup})$ to an element $u\in C_0(V)$. Define $T\colon H_0^1(V)\to\R$ by
$$T(v)=\int_Vh(x,u(x))v(x)d\mu,\hbox{ for all }v\in H_0^1(V).$$
According to (\ref{embeddingconstant}), the functional $T$ belongs to $(H_0^1(V))^*$. We next show that the sequence $(J'(u_n))$ converges to $T$ in $(H_0^1(V))^*$.
By (\ref{embeddingconstant}) the following inequality holds for every index $n$
$$||J'(u_n)-T||\leq c\int_V|h(x,u_n(x))-h(x,u(x))|d\mu.$$
Using the Lebesgue dominated convergence theorem, we conclude that $(J'(u_n))$ converges to $T$ in $(H_0^1(V))^*$. Thus $J'$ is compact.

c) The assertion follows from b) and Corollary 41.9 of \cite{Ze}. We also give a direct proof: Clearly $H$ is continuous. Let $(u_n)$ be a sequence which converges weakly to $u$ in $H_0^1(V)$. Since the embedding
(\ref{embedding}) is compact, $(u_n)$ converges to $u$ in $(C_0(V),||\cdot||_{sup})$. The Lebesgue dominated convergence theorem implies now
that $(J(u_n))$ converges to $J(u)$. Thus $J$ is sequentially weakly continuous.
\end{proof}

\begin{proposition}\label{criticalpoint}
Let $h\colon V\times\R\to\R$ be continuous. Then the functional $I\colon H_0^1(V)\to\R$ given by
$$I(u)=\frac12||u||^2-J(u),$$
where $J\colon H_0^1(V)\to\R$ is defined in Proposition {\rm \ref{compderivative}},
is a $C^1$-functional and its derivative $I'\colon H_0^1(V)\to (H_0^1(V))^*$
is given by
$$I'(u)(v)={\mathcal W}(u,v)-\int_Vh(x,u(x))v(x)d\mu,\hbox{ for all }u,v\in H_0^1(V).$$
In particular, $u\in H_0^1(V)$ is a solution of problem $(P)$
if and only if $u$ is a critical point of $I$.
\end{proposition}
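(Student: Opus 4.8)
The plan is to write $I=\Phi-J$, where $\Phi\colon H_0^1(V)\to\R$ is the quadratic functional $\Phi(u)=\frac12\|u\|^2$ and $J$ is the functional from Proposition \ref{compderivative}. That proposition already furnishes everything we need about $J$: it is a $C^1$-functional with $J'(u)(v)=\int_Vh(x,u(x))v(x)d\mu$. Hence the whole task reduces to treating $\Phi$ and then adding, since a difference of $C^1$-functionals is a $C^1$-functional whose derivative is the difference of the derivatives.

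For $\Phi$, I would use that, by the construction in Section 3, the norm $\|\cdot\|$ on $H_0^1(V)$ is induced by the inner product $\mathcal W$. Then, for fixed $u,v\in H_0^1(V)$, a direct expansion gives
$$\Phi(u+v)-\Phi(u)=\mathcal W(u,v)+\frac12\|v\|^2.$$
The map $v\mapsto\mathcal W(u,v)$ is linear, and by the Cauchy--Schwarz inequality $|\mathcal W(u,v)|\leq\|u\|\,\|v\|$ it is continuous, so it belongs to $(H_0^1(V))^*$; moreover $\frac12\|v\|^2/\|v\|=\frac12\|v\|\to0$ as $v\to0$. Therefore $\Phi$ is Fr\'echet differentiable at every $u$ with $\Phi'(u)=\mathcal W(u,\cdot)$. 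Continuity of $u\mapsto\Phi'(u)$ follows from
$$\|\Phi'(u)-\Phi'(w)\|_{(H_0^1(V))^*}=\sup_{\|v\|\leq1}|\mathcal W(u-w,v)|\leq\|u-w\|,$$
so $\Phi$ is a $C^1$-functional. Combining, $I=\Phi-J$ is a $C^1$-functional and $I'(u)(v)=\mathcal W(u,v)-\int_Vh(x,u(x))v(x)d\mu$ for all $u,v\in H_0^1(V)$.

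Finally, for the ``in particular'' clause: $u$ is a critical point of $I$ iff $I'(u)$ is the zero element of $(H_0^1(V))^*$, i.e.\ iff $\mathcal W(u,v)-\int_Vh(x,u(x))v(x)d\mu=0$ for every $v\in H_0^1(V)$. This is exactly the defining identity of a weak solution of $(P)$, and by the regularity remark preceding the proposition a weak solution of $(P)$ is a solution of $(P)$; this proves the equivalence.

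I do not expect any genuine obstacle here: all the substantive analysis (continuity and weak continuity of $J$, compactness of $J'$, the supremum-norm embedding estimate) is already carried out in Proposition \ref{compderivative} and Section 3, and what is left is the textbook fact that the square of a Hilbert-space norm is $C^1$ with derivative given by the inner product. The only point deserving a line of care is confirming that $\mathcal W(u,\cdot)$ is a \emph{bounded} linear functional, which is immediate from Cauchy--Schwarz.
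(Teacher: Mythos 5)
Your proposal is correct, but it is not the route the paper takes: the paper's entire proof of this proposition is a citation of Proposition 2.19 in \cite{FaHu}, which already contains the differentiability of the full energy functional $u\mapsto\frac12\|u\|^2-\int_VH(x,u(x))d\mu$ and the formula for its derivative. You instead give a self-contained argument by splitting $I=\Phi-J$, taking the $J$-part from Proposition \ref{compderivative} (which the paper in turn grounds in the same reference), and proving directly that $\Phi(u)=\frac12\|u\|^2$ is $C^1$ with $\Phi'(u)=\mathcal W(u,\cdot)$ via the expansion $\Phi(u+v)-\Phi(u)=\mathcal W(u,v)+\frac12\|v\|^2$, Cauchy--Schwarz for boundedness, and the estimate $\|\Phi'(u)-\Phi'(w)\|\leq\|u-w\|$ for continuity of the derivative. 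All of these steps are sound, and the ``in particular'' clause is handled exactly as intended: criticality of $u$ means $\mathcal W(u,v)=\int_Vh(x,u(x))v(x)d\mu$ for all $v$, which is the definition of a weak solution, and by the paper's convention (backed by the regularity remark) weak solutions are simply called solutions. What your approach buys is independence from the external reference for the quadratic part, at the cost of a few standard lines; what the paper's citation buys is brevity, since Falconer--Hu's Proposition 2.19 delivers the statement wholesale. One could quibble that your treatment of $J$ still ultimately leans on that same external result through Proposition \ref{compderivative}, but since that proposition is established earlier in the paper, invoking it is perfectly legitimate.
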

\begin{proof} See Proposition 2.19 in \cite{FaHu}.
\end{proof}

\begin{remark}\label{panda}
The functional $I\colon H_0^1(V)\to\R$ defined in Proposition \ref{criticalpoint} is called the {\it energy functional} attached to problem $(P)$.
\end{remark}

We now state for later use some fundamental properties of the energy functional $I$.

\begin{corollary}\label{Iwslsc}
Let $h\colon V\times\R\to\R$ be continuous. Then the functional $I\colon H_0^1(V)\to\R$ defined in Proposition {\rm \ref{criticalpoint}} is sequentially
weakly lower semicontinuous.
\end{corollary}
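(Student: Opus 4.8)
The plan is to exploit the decomposition $I=\frac12||\cdot||^2-J$ and to argue that each summand behaves well under weak convergence: the first is sequentially weakly lower semicontinuous, while the second is sequentially weakly continuous, so that their sum is sequentially weakly lower semicontinuous.

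First I would recall that on the Hilbert space $(H_0^1(V),||\cdot||)$ the functional $u\mapsto\frac12||u||^2$ is convex and strongly continuous, hence sequentially weakly lower semicontinuous. Concretely, if $(u_n)$ converges weakly to $u$, then the standard Hilbert-space inequality $||u||\leq\liminf_{n\to\infty}||u_n||$ yields $\frac12||u||^2\leq\liminf_{n\to\infty}\frac12||u_n||^2$. Next, by part c) of Proposition \ref{compderivative}, the functional $J$ is sequentially weakly continuous, so $J(u_n)\to J(u)$ along the same sequence.

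Putting these together, for an arbitrary sequence $(u_n)$ converging weakly to $u$ in $H_0^1(V)$ one obtains
$$\liminf_{n\to\infty}I(u_n)=\liminf_{n\to\infty}\left(\frac12||u_n||^2-J(u_n)\right)\geq\frac12||u||^2-J(u)=I(u),$$
where the inequality uses that the limit inferior of a sum is at least the sum of a limit inferior and a genuine limit. This proves the sequential weak lower semicontinuity of $I$.

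I do not expect any genuine obstacle here: the argument is entirely standard, and the only step that deserves explicit mention is the weak lower semicontinuity of the squared norm, which is immediate from the weak-convergence inequality $||u||\leq\liminf_{n\to\infty}||u_n||$ (equivalently, from the general principle that a convex, strongly lower semicontinuous functional on a Banach space is sequentially weakly lower semicontinuous).
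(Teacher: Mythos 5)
Your proposal is correct and follows essentially the same route as the paper: the authors likewise observe that $u\mapsto||u||^2$ is convex and norm-continuous, hence sequentially weakly lower semicontinuous, and then combine this with assertion c) of Proposition \ref{compderivative} (sequential weak continuity of $J$). Your write-up merely makes the final $\liminf$ estimate explicit, which the paper leaves implicit.
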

\begin{proof}
The function $u\in H_0^1(V)\mapsto||u||^2\in\R$ is continuous in the norm
topology on $H_0^1(V)$ and convex, thus it is sequentially weakly
lower semicontinuous on $H_0^1(V)$. The conclusion follows now from assertion c) of Proposition \ref{compderivative}.
\end{proof}

\begin{corollary}\label{forPS}
Let $h\colon V\times\R\to\R$ be continuous and consider the functional $I\colon H_0^1(V)\to\R$ defined in Proposition {\rm \ref{criticalpoint}}. If $(u_n)$
is a bounded sequence in $H_0^1(V)$ such that the sequence $(I'(u_n))$ converges to $0$, then $(u_n)$ contains a convergent subsequence.
\end{corollary}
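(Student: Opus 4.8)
The plan is to show that a bounded Palais--Smale sequence for $I$ admits a convergent subsequence by exploiting the Hilbert space structure of $H_0^1(V)$ together with the compactness of the embedding \eqref{embedding}. First I would extract from the bounded sequence $(u_n)$ a subsequence (not relabelled) converging weakly to some $u\in H_0^1(V)$; this is possible since $H_0^1(V)$ is a reflexive (indeed Hilbert) space. By the compactness of \eqref{embedding}, along a further subsequence $u_n\to u$ in $(C_0(V),\|\cdot\|_{sup})$, hence in $L^2(V,\mu)$ as well.

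Next I would use the identity from Proposition \ref{criticalpoint}, namely
$$I'(u_n)(v)={\mathcal W}(u_n,v)-\int_V h(x,u_n(x))v(x)\,d\mu,\qquad v\in H_0^1(V),$$
and test it with $v=u_n-u$. Since $(I'(u_n))$ converges to $0$ in $(H_0^1(V))^*$ and $(u_n-u)$ is bounded in $H_0^1(V)$, we get $I'(u_n)(u_n-u)\to 0$. For the nonlinear term, continuity of $h$ together with $u_n\to u$ uniformly and $u_n-u\to 0$ uniformly (so in particular in $L^2$) forces $\int_V h(x,u_n(x))(u_n(x)-u(x))\,d\mu\to 0$, via the Lebesgue dominated convergence theorem (the integrand is dominated using that $(u_n)$ is uniformly bounded in $\|\cdot\|_{sup}$ and $h$ is bounded on the relevant compact set). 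Combining these, ${\mathcal W}(u_n,u_n-u)\to 0$. On the other hand, weak convergence $u_n\rightharpoonup u$ gives ${\mathcal W}(u,u_n-u)\to 0$. Subtracting yields ${\mathcal W}(u_n-u,u_n-u)=\|u_n-u\|^2\to 0$, i.e. $u_n\to u$ strongly in $H_0^1(V)$.

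The main obstacle, though a mild one, is handling the nonlinear term carefully: one must justify passing to the limit in $\int_V h(x,u_n(x))(u_n(x)-u(x))\,d\mu$, which rests on the uniform convergence coming from the compact embedding and on the continuity of $h$ restricted to $V\times[-R,R]$ for $R=c\sup_n\|u_n\|$ (finite by \eqref{embeddingconstant} and boundedness of $(u_n)$). Once that limit is established, the rest is the standard Hilbert-space argument showing that a weakly convergent Palais--Smale sequence is strongly convergent; no coercivity or growth condition on $h$ is needed here because boundedness of $(u_n)$ is assumed as a hypothesis.
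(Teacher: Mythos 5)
Your argument is correct, but it is organized differently from the paper's proof, which is essentially a two-line observation: by Proposition \ref{criticalpoint} one has $I'(u_n)={\mathcal W}(u_n,\cdot)-J'(u_n)$, where $J$ is the functional of Proposition \ref{compderivative}; since $(u_n)$ is bounded and $J'$ is compact (assertion b) of that proposition), $(J'(u_n))$ has a subsequence converging in $(H_0^1(V))^*$, and since $I'(u_n)\to0$, the functionals ${\mathcal W}(u_{n_k},\cdot)=I'(u_{n_k})+J'(u_{n_k})$ converge in the dual norm; the Riesz isometry between $H_0^1(V)$ and its dual then gives strong convergence of $(u_{n_k})$ directly, with no need to identify the limit beforehand. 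You instead run the classical ``weak limit plus testing'' scheme: extract $u_n\rightharpoonup u$, upgrade to uniform convergence via the compact embedding \eqref{embedding}, pair $I'(u_n)$ with $u_n-u$, control $\int_V h(x,u_n)(u_n-u)\,d\mu$ by the uniform bound $R=c\sup_n\|u_n\|$ and continuity of $h$ on $V\times[-R,R]$, and conclude $\|u_n-u\|^2={\mathcal W}(u_n,u_n-u)-{\mathcal W}(u,u_n-u)\to0$. Every step of this is sound (note that the domination argument is even simpler than you state: the integrand is bounded by a constant times $\|u_n-u\|_{sup}$, so no dominated convergence theorem is really needed). What the paper's route buys is brevity and reuse of the already-established compactness of $J'$; what your route buys is a self-contained argument that only invokes the compact embedding and never needs norm convergence of $J'(u_n)$ in the dual, at the cost of the extra bookkeeping with the weak limit $u$.
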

\begin{proof}
Using Proposition \ref{criticalpoint}, we know that for every index $n$
$$I'(u_n)={\mathcal W}(u_n,\cdot)-J'(u_n).$$
Assertion b) of Proposition \ref{compderivative} yields now the conclusion.
\end{proof}

\section{A Dirichlet problem depending on two parameters}
Let $f,g\colon V\times\R\to\R$ be continuous, and  define the functions $F,G\colon V\times\R \to\R$ by
$$F(x,t)=\int_0^tf(x,\xi)d\xi\quad \hbox{and}\quad G(x,t)=\int_0^tg(x,\xi)d\xi.$$
For every $\lambda,\eta\geq0$ consider the following Dirichlet problem on the SG
$$(DP_{\lambda,\eta})
\left\{
\begin{array}{l}
-\Delta u(x)=\lambda f(x,u(x))+\eta g(x,u(x)),\ \forall x\in V\setminus V_0,\\
\\
u|_{V_0}=0.
\end{array}
\right.
$$
By Proposition \ref{criticalpoint}, the energy functional attached to the problem $(DP_{\lambda,\eta})$ is the map $I\colon H_0^1(V)\to\R$ defined by
$$I(u)=\frac12||u||^2-\lambda\int_VF(x,u(x))d\mu-\eta\int_VG(x,u(x))d\mu.$$
The aim of this section is to apply Theorem \ref{Ricceri3cp} to show that, under suitable assumptions and for certain values of the parameters $\lambda$ and $\eta$,
problem $(DP_{\lambda,\eta})$ has at least three weak solutions. More precisely, we can state the following result.

\begin{theorem}\label{main1}
Assume that the following hypotheses hold:
\begin{itemize}
\item[{\rm (C1)}] The function $f\colon V\times \R\to\R$ is continuous.
\item[{\rm (C2)}] The function $F\colon V\times\R \to\R$ satisfies the following conditions:
\begin{itemize}
\item[{\rm (1)}] There exist $\alpha\in[0,2[$, $a\in L^1(V,\mu)$, and $m\geq0$ such that
$$F(x,t)\leq m(a(x)+|t|^\alpha),\hbox{ for all }(x,t)\in V\times\R.$$
\item[{\rm (2)}] There exist $t_0>0$, $M\geq0$ and $\beta>2$ such that
$$F(x,t)\leq M|t|^\beta, \hbox{ for all }(x,t)\in V\times[-t_0,t_0].$$
\item[{\rm (3)}] There exists $t_1\in\R\setminus\{0\}$ such that for all $x\in V$ and for all $t$ between $0$ and $t_1$ we have
$$F(x,t_1)>0\hbox{ and } F(x,t)\geq0.$$
\end{itemize}
\end{itemize}
Then there exists a real number $\Lambda\geq 0$ such that, for each compact interval $[\lambda_1,\lambda_2]\subset ]\Lambda,\infty[$, there exists a positive
real number $r$ with the following property: For every $\lambda\in [\lambda_1,\lambda_2]$ and every continuous function $g\colon V\times\R\to\R$ there exists
$\delta>0$ such that, for each $\eta\in[0,\delta]$, the problem $(DP_{\lambda,\eta})$ has at least three solutions whose norms are less than $r$.
\end{theorem}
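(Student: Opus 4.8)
The plan is to apply Theorem \ref{Ricceri3cp} on the Hilbert space $X=H_0^1(V)$ — which is reflexive and, by Remark \ref{separable}, separable — with
$$\Phi(u)=\frac12||u||^2,\qquad J(u)=\int_VF(x,u(x))\,d\mu,$$
where $J$ is the functional furnished by Proposition \ref{compderivative} for $h=f$; and, once a continuous $g$ is given, with $\Psi(u)=\int_VG(x,u(x))\,d\mu$, which is again a $C^1$-functional with compact derivative by Proposition \ref{compderivative} (for $h=g$). By Propositions \ref{compderivative} and \ref{criticalpoint}, the equation $\Phi'(u)=\lambda J'(u)+\eta\Psi'(u)$ is exactly the statement that $u$ is a critical point of the energy functional of $(DP_{\lambda,\eta})$, i.e.\ a solution of $(DP_{\lambda,\eta})$; hence it suffices to verify hypotheses (i)--(v) of Theorem \ref{Ricceri3cp} for $\Phi$ and $J$ and then to read off $\Lambda:=1/\rho_2$.

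Verifying (i)--(iv) should be routine. For (i): $\Phi$ is coercive and bounded on bounded sets since $\Phi(u)=\frac12||u||^2$; it is $C^1$ with $\Phi'(u)={\mathcal W}(u,\cdot)$, so $\Phi'$ is the Riesz isomorphism $H_0^1(V)\to(H_0^1(V))^*$, a linear isometry, whose inverse is therefore continuous; and $\Phi$ is sequentially weakly lower semicontinuous because $||\cdot||^2$ is convex and norm-continuous. For (ii): in a Hilbert space, weak convergence of $(u_n)$ to $u$ gives $||u||\le\liminf||u_n||$, and the extra hypothesis $\liminf\Phi(u_n)\le\Phi(u)$ then forces $||u_n||\to||u||$ along a subsequence, which together with weak convergence yields strong convergence. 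Hypothesis (iii) is assertions a)--b) of Proposition \ref{compderivative}. For (iv) one takes $u_0=0$: it is the strict global minimizer of $\Phi$, and $\Phi(0)=0$, $J(0)=\int_VF(x,0)\,d\mu=0$ since $F(x,0)=0$.

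The substance of the argument is hypothesis (v). First I would show $\rho_1=0$. By (C2)(1), the estimate (\ref{embeddingconstant}) and $\mu(V)=1$ one has $J(u)\le m\bigl(\int_Va\,d\mu+c^\alpha||u||^\alpha\bigr)$, so $J(u)/\Phi(u)\to0$ as $||u||\to\infty$ because $\alpha<2$; by (C2)(2), as $u\to0$ one eventually has $||u||_{sup}\le c||u||\le t_0$, hence $J(u)\le Mc^\beta||u||^\beta$ and $J(u)/\Phi(u)\le2Mc^\beta||u||^{\beta-2}\to0$ because $\beta>2$. Thus both $\limsup$'s in the definition of $\rho_1$ are $\le0$, so $\rho_1=0$. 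Next I would exhibit a function with positive quotient. Using Lemma \ref{MaMianalogue}, start from any nonzero $w\in H_0^1(V)$, put $v=|w|\in H_0^1(V)$ (so $v\ge0$, $v\not\equiv0$, $v|_{V_0}=0$), let $\theta\colon\R\to\R$ be the Lipschitz map with $\theta(0)=0$, $\theta(t)=t_1t$ on $[0,1]$, $\theta(t)=t_1$ on $[1,\infty[$, $\theta(t)=0$ on $]-\infty,0]$, choose $s>0$ with $s||v||_{sup}>1$, and set $u=\theta\circ(sv)\in H_0^1(V)$. Then $u(x)$ lies between $0$ and $t_1$ for every $x\in V$, and $u\equiv t_1$ on the nonempty open set $\{x\in V:sv(x)>1\}$, which has positive $\mu$-measure by (\ref{support}). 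Since $F(\cdot,t_1)$ is continuous and, by (C2)(3), positive on the compact set $V$, we get $J(u)\ge\mu(\{sv>1\})\cdot\min_{x\in V}F(x,t_1)>0$, while $\Phi(u)>0$ because $u\not\equiv0$; hence $\rho_2>0=\rho_1$ and (v) holds.

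With (i)--(v) in hand, set $\Lambda:=1/\rho_2\in[0,\infty[$ (convention $1/\infty=0$); since $\rho_1=0$ gives $1/\rho_1=\infty$, every compact $[\lambda_1,\lambda_2]\subset\,]\Lambda,\infty[$ lies in $]1/\rho_2,1/\rho_1[$, and Theorem \ref{Ricceri3cp} — applied with the $\Psi$ associated as above to any given continuous $g$ — produces the radius $r$, the threshold $\delta$, and, for each $\eta\in[0,\delta]$, three solutions of $\Phi'(u)=\lambda J'(u)+\eta\Psi'(u)$ of norm less than $r$; by Proposition \ref{criticalpoint} these are three solutions of $(DP_{\lambda,\eta})$. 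I expect the only delicate step to be the construction of the test function $u$ in (v): turning the pointwise positivity recorded in (C2)(3) into strict positivity of an integral is exactly what forces us to use both Lemma \ref{MaMianalogue} (to keep $u$ in $H_0^1(V)$ after clamping) and property (\ref{support}) (so that the level set $\{u=t_1\}$ has positive measure).
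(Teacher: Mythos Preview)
Your proposal is correct and follows essentially the same route as the paper: apply Theorem~\ref{Ricceri3cp} with $\Phi(u)=\tfrac12\|u\|^2$ and $J(u)=\int_VF(x,u)\,d\mu$, check (i)--(iv) directly, use (C2)(1) and (C2)(2) together with (\ref{embeddingconstant}) to get $\rho_1=0$, and build via Lemma~\ref{MaMianalogue} and (\ref{support}) a test function equal to $t_1$ on a nonempty open set and lying between $0$ and $t_1$ elsewhere to force $\rho_2>0$. Your clamping map $\theta$ is a minor cosmetic variant of the paper's $h(t)=\min\{t,t_1\}$ (with the bonus that it handles both signs of $t_1$ at once), but the idea and the use of (\ref{support}) are identical.
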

\begin{proof}
Set $X:=H_0^1(V)$. Then $X$ is separable (by Remark \ref{separable}) and reflexive (as a Hilbert space). Define the functions $\Phi,J\colon X\to\R$ for every $u\in X$ by
$$\Phi(u)=\frac12||u||^2,\quad J(u)=\int_VF(x,u(x))d\mu.$$
In order to apply Theorem \ref{Ricceri3cp}, we show that the conditions (i)--(v) required in this theorem are satisfied for the above defined functions.

Clearly condition (i) of Theorem \ref{Ricceri3cp} is satisfied. (Note that $\Phi'\colon X\to X^*$ is defined by $\Phi'(u)(v)={\mathcal W}(u,v)$ for every
$u,v\in X$.) Condition (ii) is a consequence of the facts that $X$ is uniformly convex and that $\Phi$ is sequentially weakly lower semicontinuous.
Condition (iii) follows from assertions a) and b) of Proposition \ref{compderivative}. Obviously condition (iv) holds for $u_0=0$.

To verify (v), observe first that assumption (1) of $(C2)$ implies,
together with (\ref{embeddingconstant}), that for every $u\in X\setminus\{0\}$ the
following inequality holds:
$$
\frac{J(u)}{\Phi(u)}\leq\frac{2m}{||u||^2}\int_Vad\mu+2mc^\alpha||u||^{\alpha-2}.
$$
Since $\alpha<2$, we conclude that
\begin{equation}\label{limpsup1}
\limsup_{||u||\to\infty}\frac{J(u)}{\Phi(u)}\leq0.
\end{equation}

Note that if $u\in X$ is so that $||u||\leq \frac{t_0}c$, then, by (\ref{embeddingconstant}),  $||u||_{sup}\leq t_0$.
It follows that $u(x)\in[-t_0,t_0]$ for every $x\in V$.
Using (2) of $(C2)$, we thus get that for every $x\in V$
$$F(x,u(x))\leq M|u(x)|^\beta\leq Mc^\beta||u||^\beta.$$
Hence the following inequality holds for every $u\in X\setminus\{0\}$ with $||u||\leq \frac{t_0}c$
$$\frac{J(u)}{\Phi(u)}\leq  2M c^\beta||u||^{\beta-2}.$$
Since $\beta>2$, we obtain
\begin{equation}\label{limpsup2}
\limsup_{u\to0}\frac{J(u)}{\Phi(u)}\leq0.
\end{equation}
The inequalities (\ref{limpsup1}) and (\ref{limpsup2}) yield that
\begin{equation}\label{rho1}
\rho_1:=\max\left\{0,\, \limsup_{||u||\to\infty} \frac{J(u)}{\Phi(u)},\, \limsup_{u\to u_0}\frac{J(u)}{\Phi(u)}\right\}=0.
\end{equation}

Without  loss of generality we may assume that the real number $t_1$ in condition (3) of $(C2)$ is positive.
Lemma \ref{MaMianalogue} implies that $|u|\in H_0^1(V)$ whenever $u\in H_0^1(V)$.
Thus we can pick a function $u\in H_0^1(V)$ such that $u(x)\geq0$ for every $x\in V$, and
such that there is an element $x_0\in
V$ with $u(x_0)>t_1$. It follows that $U:=\{x\in V\mid u(x)>t_1\}$ is a
nonempty open subset of $V$. Let $h\colon\R\to\R$ be defined by
$h(t)=\min\{t,t_1\}$, for every $t\in\R$. Then $h(0)=0$ and $h$ is a
Lipschitz map with Lipschitz constant $L=1$. Lemma
\ref{MaMianalogue} yields that $u_1:=h\circ u\in H_0^1(V)$. Moreover,
$u_1(x)=t_1$ for every $x\in U$, and $0\leq u_1(x)\leq t_1$ for every $x\in
V$. Then, according to condition (3) of $(C2)$, we obtain
$$F(x,u_1(x))>0,\hbox{ for every }x\in U,\quad\hbox{and}\quad F(x,u_1(x))\geq 0,\hbox{ for every }x\in V.$$
Together with (\ref{support}) we then conclude that $J(u_1)>0$. Thus
\begin{equation}\label{rho2}
\rho_2:=\sup_{u\in\Phi^{-1}(]0,\infty[)}\frac{J(u)}{\Phi(u)}>0.
\end{equation}
Relations (\ref{rho1}) and (\ref{rho2}) finally imply that assertion (v) of Theorem \ref{Ricceri3cp} is also fulfilled. Put $\Lambda:=\frac1{\rho_2}$ (with the
convention $\frac1\infty:=0$). Note that if $g\colon V\times\R\to\R$ is continuous, then the map $\Psi\colon X\to \R$, defined by
$$ \Psi(u)=\int_VG(x,u(x))d\mu,$$
is, by the assertions a) and b) of Proposition \ref{compderivative}, a $C^1$-functional with compact derivative.
So, applying Theorem \ref{Ricceri3cp}
 and Proposition \ref{criticalpoint}, we  obtain the asserted conclusion.
\end{proof}

\begin{example}
Let $0<\alpha<2<\beta$ and define $f_1\colon\R\to\R$ by
$$f_1(t)=
\left\{
\begin{array}{l}
|t|^{\beta-2}t,\hbox{ if } |t|\leq1\\
|t|^{\alpha-2}t,\hbox{ if } |t|>1.
\end{array}
\right.
$$
Then $F_1\colon\R\to\R$, $F_1(t)=\int_0^tf_1(\xi)d\xi$, is given by
$$F_1(t)=
\left\{
\begin{array}{l}
\frac1\beta|t|^{\beta},\hbox{ if } |t|\leq1\\
\frac1\beta-\frac1\alpha+\frac1\alpha|t|^{\alpha},\hbox{ if } |t|>1.
\end{array}
\right.
$$
Consider a continuous map $a\colon V\to\R$ with $a(x)>0$, for every $x\in V$, and define $f\colon V\times\R\to\R$ by $f(x,t)=a(x)f_1(t)$. Then
$F\colon V\times\R\to\R$, $F(x,t)=\int_0^t f(x,\xi)d\xi$, is given by $F(x,t)=a(x)F_1(t)$. Hence $F$ satisfies condition $(C2)$ of Theorem \ref{main1}.
\end{example}

\section{A perturbed two-parameters Dirichlet problem}

Now we study, in a particular case, a perturbed version of the two-parameters problem $(DP_{\lambda,\eta})$ of the previous section.
More exactly, for fixed reals $r,s,q$ with $1<r<s<2<q$ and for the parameters $\lambda,\eta\geq0$, consider the following Dirichlet problem on the SG:
$$(P_{\lambda,\eta})
\left\{
\begin{array}{l}
-\Delta u(x)=\lambda |u(x)|^{s-2}u(x)-\eta |u(x)|^{r-2}u(x)+|u(x)|^{q-2}u(x),\ \forall x\in V\setminus V_0,\\
\\
u|_{V_0}=0,
\end{array}
\right.
$$
where we put, by definition, $|0|^{\ell}\cdot0:=0$, for every $\ell<0$.
By Proposition \ref{criticalpoint} and Remark \ref{panda}, the map $I_{\lambda,\eta}\colon H_0^1(V)\to\R$, defined by
\begin{equation}\label{ef}
I_{\lambda,\eta}(u)=\frac12||u||^2-\frac{\lambda}s\int_V|u|^sd\mu+\frac\eta r\int_V|u|^rd\mu-\frac1q\int_V|u|^qd\mu,
\end{equation}
is the energy functional attached to problem $(P_{\lambda,\eta})$. The derivative of this map is given, for every $u,v\in H_0^1(V)$, by
\begin{equation}\label{efderivative}
I'_{\lambda,\eta}(u)(v)={\mathcal W}(u,v)-\lambda\int_V|u|^{s-2}uvd\mu+\eta\int_V|u|^{r-2}uvd\mu-\int_V|u|^{q-2}uvd\mu.
\end{equation}

For the sake of completeness we recall the two mountain pass theorems that will be used to prove the main result of this section. The first one
is the celebrated mountain pass theorem due to Ambrosetti and Rabinowitz (e.g., Theorem 2.2 in \cite{Ra}):

\begin{theorem}\label{MPAR}
Let $X$ be a real Banach space and let $I\colon X\to\R$ be a $C^1$-functional satisfying the Palais-Smale condition. Furthermore assume that $I(0)=0$ and that
the following conditions hold:
\begin{itemize}
\item[{\rm (i)}] There are reals $\rho,\alpha>0$ such that $I|_{\partial B_\rho}\geq\alpha$.
\item[{\rm (ii)}] There is an element $e\in E\setminus B_\rho$ such that $I(e)\leq0$.
\end{itemize}
Then the real number $\kappa$, characterized as
\begin{equation}\label{criticalvalue}
\kappa:=\inf_{g\in\Gamma}\max_{t\in[0,1]}I(g(t)),
\end{equation}
where
$$\Gamma:=\{g\colon[0,1]\to X\mid\ g \hbox{ continuous},\ g(0)=0,\ g(1)=e\},$$ is a critical value of $I$ with $\kappa\geq\alpha$.
\end{theorem}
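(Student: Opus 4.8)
The plan is to prove Theorem~\ref{MPAR} by the classical minimax argument based on a quantitative deformation lemma, arguing by contradiction. First I would check that $\kappa$ is a well-defined real number satisfying $\kappa\geq\alpha$. The straight segment $t\mapsto te$ belongs to $\Gamma$, so $\Gamma\neq\emptyset$ and $\kappa<\infty$. For every $g\in\Gamma$ the real-valued function $t\mapsto\|g(t)\|$ is continuous, vanishes at $t=0$, and exceeds $\rho$ at $t=1$ (since $e\notin B_\rho$); by the intermediate value theorem there is $t^\star\in(0,1)$ with $g(t^\star)\in\partial B_\rho$, so hypothesis (i) yields $\max_{t\in[0,1]}I(g(t))\geq I(g(t^\star))\geq\alpha$. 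Taking the infimum over $g\in\Gamma$ gives $\kappa\geq\alpha>0$.

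Next I would suppose, for contradiction, that $\kappa$ is not a critical value of $I$, and fix $\varepsilon\in(0,\alpha/2)$, so that $\kappa-2\varepsilon>0\geq I(e)$ and hence neither $I(0)=0$ nor $I(e)$ lies in the strip $[\kappa-2\varepsilon,\kappa+2\varepsilon]$. Using that $I$ has no critical point at level $\kappa$ together with the Palais--Smale condition, one shows (after possibly shrinking $\varepsilon$) that $\|I'\|$ is bounded below by a positive constant on $I^{-1}([\kappa-2\varepsilon,\kappa+2\varepsilon])$. Since $X$ is merely a Banach space and not a Hilbert space, one cannot flow along $-\nabla I$; instead one constructs a locally Lipschitz pseudo-gradient vector field for $I$ on $\{u:I'(u)\neq 0\}$ in the sense of Palais, truncates it, and integrates it to obtain a continuous map $\sigma\colon[0,1]\times X\to X$ with the properties $\sigma(0,\cdot)=\mathrm{id}_X$; $\sigma(s,u)=u$ whenever $I(u)\notin[\kappa-2\varepsilon,\kappa+2\varepsilon]$; and $I(\sigma(1,u))\leq\kappa-\varepsilon$ whenever $I(u)\leq\kappa+\varepsilon$.

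The contradiction then follows quickly. By definition of $\kappa$ as an infimum, choose $g\in\Gamma$ with $\max_{t\in[0,1]}I(g(t))<\kappa+\varepsilon$ and set $\tilde g:=\sigma(1,g(\cdot))$. Then $\tilde g$ is continuous, and since $I(0)$ and $I(e)$ lie outside the strip, the second property of $\sigma$ gives $\tilde g(0)=\sigma(1,0)=0$ and $\tilde g(1)=\sigma(1,e)=e$, so $\tilde g\in\Gamma$. The third property of $\sigma$ yields $\max_{t\in[0,1]}I(\tilde g(t))\leq\kappa-\varepsilon<\kappa$, contradicting $\kappa=\inf_{g\in\Gamma}\max_{t\in[0,1]}I(g(t))$. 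Hence $\kappa$ is a critical value of $I$, and by the first step $\kappa\geq\alpha$, which is the asserted conclusion.

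The main obstacle is the deformation lemma itself in the Banach-space setting: the Palais--Smale hypothesis is exactly what guarantees the uniform lower bound on $\|I'\|$ along the strip $I^{-1}([\kappa-2\varepsilon,\kappa+2\varepsilon])$ (there being no critical points at level $\kappa$ by assumption), and this bound is what makes the truncated pseudo-gradient flow decrease $I$ by the required amount while fixing points outside the strip. An alternative route that avoids constructing $\sigma$ explicitly is to endow $\Gamma$ with the uniform metric, which makes it a complete metric space, and apply Ekeland's variational principle to the functional $g\mapsto\max_{t\in[0,1]}I(g(t))$; this produces an almost-minimizing path along which one can extract a Palais--Smale sequence at level $\kappa$, whose limit, supplied by the (PS) condition, is the desired critical point with critical value $\kappa\geq\alpha$.
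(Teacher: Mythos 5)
Your argument is correct, but note that the paper itself offers no proof of this statement: Theorem~\ref{MPAR} is simply quoted from Rabinowitz (Theorem 2.2 of \cite{Ra}) as a known tool, so there is no internal proof to compare against. What you have written is precisely the classical argument from that source: the intermediate value theorem forces every path in $\Gamma$ to meet $\partial B_\rho$, giving $\kappa\geq\alpha>0$; then, assuming $\kappa$ is not a critical value, the Palais--Smale condition yields a positive lower bound for $\|I'\|$ on a strip $I^{-1}([\kappa-2\varepsilon,\kappa+2\varepsilon])$ (after shrinking $\varepsilon$), a pseudo-gradient vector field replaces the gradient in the Banach-space setting, and the resulting deformation pushes an almost-optimal path below level $\kappa-\varepsilon$ while fixing its endpoints because $I(0)=0$ and $I(e)\leq 0$ lie strictly below the strip (here your choice $\varepsilon<\alpha/2\leq\kappa/2$ is exactly what is needed), contradicting the definition of $\kappa$. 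The only caveats are cosmetic: the crossing parameter $t^\star$ may equal $1$ when $\|e\|=\rho$, which changes nothing, and the deformation lemma itself is invoked rather than proved, but you identify it correctly as the technical core and state the properties you need in a usable, standard form; your alternative route via Ekeland's variational principle on $(\Gamma,\|\cdot\|_{\sup})$ is likewise a legitimate known proof.
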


The next result generalizes the above Theorem by weakening condition (i). It goes back to P. Pucci and J. Serrin, and can be found
in \cite{PS}.

\begin{theorem}\label{MPPS}
Let $X$ be a real Banach space and let $I\colon X\to\R$ be a $C^1$-functional satisfying the Palais-Smale condition. Furthermore assume that $I(0)=0$ and that
the following conditions hold:
\begin{itemize}
\item[{\rm (i)}] There exists a real number  $\rho>0$   such that $I|_{\partial B_\rho}\geq 0$.
\item[{\rm (ii)}] There is an element $e\in E\setminus\overline B_\rho$ with $I(e)\leq 0$.
\end{itemize}
Then the real number $\kappa$ defined in {\rm (\ref{criticalvalue})}
is a critical value of $I$ with $\kappa\geq0$. If $\kappa=0$, there exists a critical point of $I$ on $\partial B_\rho$ corresponding to the critical value $0$.
\end{theorem}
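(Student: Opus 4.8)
The statement is a version of the mountain pass theorem, so the plan is to run the usual minimax scheme, treating the degenerate value $\kappa=0$ with extra care. First I would establish $\kappa\geq 0$: for any $g\in\Gamma$ the map $t\mapsto\|g(t)\|$ is continuous with $\|g(0)\|=0<\rho<\|e\|=\|g(1)\|$ (since $e\notin\overline B_\rho$), so by the intermediate value theorem $g(t^\ast)\in\partial B_\rho$ for some $t^\ast\in\,]0,1[$, whence $\max_{[0,1]}I\circ g\geq I(g(t^\ast))\geq 0$ by (i). Taking the infimum over $\Gamma$ gives $\kappa\geq 0$, and the same computation shows more generally $\kappa\geq\inf_{\partial B_\rho}I$, so that $\inf_{\partial B_\rho}I=0$ whenever $\kappa=0$.

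Next I would show that $\kappa$ is a critical value when $\kappa>0$ by the classical argument. If $K_\kappa:=\{u:I'(u)=0,\ I(u)=\kappa\}=\emptyset$, then by the Palais--Smale condition the deformation lemma yields, for some $\varepsilon\in\,]0,\kappa/2[$, a continuous $\eta\colon X\to X$ such that, writing $I^a:=\{u:I(u)\leq a\}$, one has $\eta(I^{\kappa+\varepsilon})\subseteq I^{\kappa-\varepsilon}$ and $\eta=\mathrm{id}$ outside $\{u:\kappa-2\varepsilon\leq I(u)\leq\kappa+2\varepsilon\}$. Since $\varepsilon<\kappa/2$ and $I(0)=0$, $I(e)\leq 0$, the map $\eta$ fixes $0$ and $e$, so picking $g\in\Gamma$ with $\max I\circ g<\kappa+\varepsilon$, the path $\eta\circ g$ again lies in $\Gamma$ and satisfies $\max I\circ(\eta\circ g)\leq\kappa-\varepsilon<\kappa$, contradicting the definition of $\kappa$. (This is in essence the proof of Theorem~\ref{MPAR}.)

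The remaining case $\kappa=0$, where one must locate a critical point precisely on $\partial B_\rho$, is the step I expect to be the main obstacle: the previous argument fails, since $I(0)=\kappa=0$ and hence the deformation at level $0$ need not fix the endpoint $0$, and indeed, as shown above, no $g\in\Gamma$ has $\max I\circ g<0$, so the contradiction cannot come from merely lowering the minimax value. I would argue by contradiction, assuming $K_0\cap\partial B_\rho=\emptyset$. By (PS) the set $K_0$ is compact, hence at positive distance from the closed set $\partial B_\rho$, so for a small $\delta>0$ the closed annulus $A=\{u:|\,\|u\|-\rho\,|\leq\delta\}$ contains no critical points of $I$; a further use of (PS) gives $\varepsilon_0,c_0>0$ with $\|I'(u)\|\geq c_0$ whenever $u\in A$ and $|I(u)|\leq 2\varepsilon_0$. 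Following the normalised negative (pseudo-)gradient flow of $I$ for a short time then shows that every $u\in\partial B_\rho$ with $I(u)\leq\varepsilon$ can be pushed, inside $A$, to a point with $I\leq-\varepsilon$, provided $\varepsilon\leq\varepsilon_0$ is small enough that the flow (whose speed is bounded) cannot leave $A$ before $I$ has dropped by $2\varepsilon$. The delicate point — and this is precisely where the Pucci--Serrin refinement is needed — is to convert this local descent near $\partial B_\rho$, exploiting that $0$ and $e$ lie off $A$ and can be kept fixed, into a contradiction with the properties of the minimax value; an alternative is to apply Ekeland's variational principle to $I|_{\partial B_\rho}$, producing $u_n\in\partial B_\rho$ with $I(u_n)\to 0$ and vanishing tangential derivative, the crux then being to show, using the full mountain-pass geometry rather than just $I\geq 0$ on $\partial B_\rho$, that the Lagrange-multiplier term also vanishes, so that $(u_n)$ is a Palais--Smale sequence at level $0$; its limit, furnished by (PS), is then a critical point of $I$ on $\partial B_\rho$ at level $0$, which completes the proof.
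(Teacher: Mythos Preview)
The paper does not contain a proof of this theorem at all: it is merely quoted as a known result, with the sentence ``It goes back to P.~Pucci and J.~Serrin, and can be found in \cite{PS}'' immediately preceding its statement, and no proof or even sketch follows. So there is nothing in the paper to compare your attempt against.

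As for the substance of your sketch: the part for $\kappa>0$ is the standard deformation argument and is fine, and your observation that $\kappa=0$ forces $\inf_{\partial B_\rho}I=0$ is correct and useful. For the crucial case $\kappa=0$ you candidly flag the difficulty but do not actually close the argument; both routes you mention (a localised deformation near the sphere that fixes $0$ and $e$, or an Ekeland-type argument on $\partial B_\rho$ combined with control of the normal component) are in the spirit of the original Pucci--Serrin proof, but each requires a genuinely nontrivial step that you have only described rather than carried out. In particular, in the Ekeland approach the claim that the Lagrange multiplier vanishes is exactly the heart of the matter and needs the full minimax structure (not just $I|_{\partial B_\rho}\geq 0$), so as written your proposal is an accurate roadmap rather than a complete proof.
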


We also recall two standard results concerning the existence of minimum points of sequentially weakly lower semicontinuous functionals.

\begin{proposition}\label{WI}
Let $X$ be a reflexive real Banach space, $M$ a bounded and sequentially weakly closed subset of $X$, and $f\colon M\to\R$ a sequentially weakly
lower semicontinuous functional. Then $f$ possesses at least one minimum point.
\end{proposition}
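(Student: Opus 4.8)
The plan is to apply the classical direct method of the calculus of variations. First I would put $m:=\inf_{u\in M}f(u)\in[-\infty,+\infty)$ and choose a minimizing sequence $(u_n)$ in $M$, that is, a sequence with $f(u_n)\to m$ as $n\to\infty$; such a sequence exists directly from the definition of infimum (and this makes sense even if a priori $m=-\infty$). The goal is then to produce a point of $M$ at which $f$ attains the value $m$, which in particular will force $m$ to be finite.

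Next I would extract a weakly convergent subsequence. Since $M$ is bounded, $(u_n)$ is a bounded sequence in $X$; since $X$ is reflexive, bounded sequences are relatively weakly sequentially compact (closed balls in a reflexive space are weakly compact by Kakutani's theorem, and weak compactness coincides with weak sequential compactness by the Eberlein--\v{S}mulian theorem). Hence there exist a subsequence $(u_{n_k})_k$ and an element $u\in X$ such that $u_{n_k}$ converges weakly to $u$ in $X$. Because $M$ is sequentially weakly closed and $u_{n_k}\in M$ for every $k$, we get $u\in M$.

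Finally I would invoke the sequential weak lower semicontinuity of $f$ together with the fact that $(f(u_{n_k}))_k$ is a subsequence of a sequence converging to $m$, which gives
$$f(u)\le\liminf_{k\to\infty}f(u_{n_k})=m.$$
On the other hand $u\in M$ yields $f(u)\ge m$, so $f(u)=m$. In particular $m$ is a real number and $u$ is a minimum point of $f$, as claimed. The only point deserving attention is the degenerate possibility $m=-\infty$, but this is automatically excluded by the displayed inequality, since $f(u)$ is by definition a real number; beyond that I expect no genuine obstacle, as every ingredient needed --- a minimizing sequence, weak sequential compactness of bounded subsets of a reflexive space, weak closedness of $M$, and weak lower semicontinuity of $f$ --- is supplied verbatim by the hypotheses.
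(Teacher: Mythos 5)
Your proof is correct: it is the standard direct-method argument (minimizing sequence, weak sequential compactness of bounded sets in a reflexive space via Kakutani and Eberlein--\v{S}mulian, sequential weak closedness of $M$, then sequential weak lower semicontinuity), and the possible degeneracy $\inf_M f=-\infty$ is properly ruled out. The paper itself offers no proof --- it merely recalls this as a standard result --- and your argument is precisely the classical one it tacitly relies on.
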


\begin{proposition}\label{WII}
Let $X$ be a reflexive real Banach space, $M$ a sequentially weakly closed subset of $X$, and $f\colon M\to\R$ a sequentially weakly
lower semicontinuous and coercive functional. Then $f$ possesses at least one minimum point.
\end{proposition}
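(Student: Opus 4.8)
The plan is to reduce the statement to Proposition \ref{WI} by using coercivity to confine a minimizing sequence to a bounded, sequentially weakly closed set. We may assume $M\neq\emptyset$, since otherwise there is nothing to prove. Set $c:=\inf_{u\in M}f(u)\in\R\cup\{-\infty\}$ (note $c\leq f(u_0)<+\infty$ for any fixed $u_0\in M$), and pick a minimizing sequence $(u_n)$ in $M$ with $f(u_n)\to c$.

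First I would show that $(u_n)$ is bounded. If it were not, then, after passing to a subsequence, $||u_n||\to\infty$, and the coercivity of $f$ would force $f(u_n)\to+\infty$, contradicting $f(u_n)\to c<+\infty$. Hence there is a real $R>0$ with $||u_n||\leq R$ for every $n$. Consider then $M_R:=M\cap\overline{B_R}$. The closed ball $\overline{B_R}$ is convex and norm-closed, hence weakly closed, hence sequentially weakly closed; since $M$ is sequentially weakly closed by hypothesis, the intersection $M_R$ is sequentially weakly closed as well. Moreover $M_R$ is bounded and nonempty (it contains all the $u_n$), and $f|_{M_R}$ remains sequentially weakly lower semicontinuous. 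Proposition \ref{WI} therefore provides a point $\bar u\in M_R$ at which $f|_{M_R}$ attains its minimum. In particular $f(\bar u)\leq f(u_n)$ for all $n$, so $f(\bar u)\leq c$; since $\bar u\in M_R\subseteq M$ we also have $f(\bar u)\geq c$. Hence $f(\bar u)=c\in\R$, and $\bar u$ is a minimum point of $f$ on $M$.

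There is no genuine obstacle here; the two places requiring attention are precisely where the hypotheses enter — coercivity, to bound the minimizing sequence, and sequential weak closedness, to keep the auxiliary minimizer inside $M$ — together with the fact underlying Proposition \ref{WI} that in a reflexive Banach space bounded sequences admit weakly convergent subsequences. One could also argue directly, bypassing Proposition \ref{WI}: by reflexivity extract a subsequence $u_{n_k}\rightharpoonup\bar u$; since $M$ is sequentially weakly closed, $\bar u\in M$; and by sequential weak lower semicontinuity $f(\bar u)\leq\liminf_{k\to\infty}f(u_{n_k})=c$, whence $f(\bar u)=c$.
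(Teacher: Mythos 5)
Your proof is correct. Note that the paper itself offers no argument here: Propositions \ref{WI} and \ref{WII} are merely ``recalled'' as standard results (the generalized Weierstrass theorem, as in Zeidler's books), so there is nothing to compare against. Your reduction to Proposition \ref{WI} is sound: coercivity bounds a minimizing sequence (and the case $\inf_M f=-\infty$ is automatically excluded at the end, since the minimizer supplied by Proposition \ref{WI} has a real value below every $f(u_n)$), the set $M\cap\overline{B_R}$ is bounded and sequentially weakly closed, and the minimizer obtained there is a global minimizer on $M$. The direct argument you sketch at the end (extract a weakly convergent subsequence by reflexivity, use sequential weak closedness of $M$ and sequential weak lower semicontinuity of $f$) is the standard textbook proof and is equally valid.
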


We next establish some important properties of the energy functional $I_{\lambda,\eta}\colon H_0^1(V)\to\R$ attached to problem $(P_{\lambda,\eta})$.

\begin{lemma}\label{alsoforPS}
Let $\lambda,\eta\geq0$. If $(u_n)$ is a sequence in $H_0^1(V)$ such that both of the sequences $(I_{\lambda,\eta}(u_n))$ and $(I'_{\lambda,\eta}(u_n))$
are bounded, then $(u_n)$ is bounded, too.
\end{lemma}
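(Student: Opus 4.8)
The plan is to derive boundedness of $(u_n)$ by combining the bound on the energy values with the bound on the derivatives, exploiting that the dominant term $\frac{1}{q}\int_V|u|^q\,d\mu$ has exponent $q>2$ while the perturbations have exponents $r,s<2$. Concretely, suppose there are constants $C_1,C_2\geq 0$ with $|I_{\lambda,\eta}(u_n)|\leq C_1$ and $\|I'_{\lambda,\eta}(u_n)\|\leq C_2$ for all $n$; the latter gives in particular $|I'_{\lambda,\eta}(u_n)(u_n)|\leq C_2\|u_n\|$. I would then form the combination $q\,I_{\lambda,\eta}(u_n)-I'_{\lambda,\eta}(u_n)(u_n)$ using formulas \eqref{ef} and \eqref{efderivative}. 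Since $\int_V|u_n|^q\,d\mu$ appears with coefficient $\frac{q}{q}-1=0$ in this combination, the $q$-term cancels entirely, leaving
$$q\,I_{\lambda,\eta}(u_n)-I'_{\lambda,\eta}(u_n)(u_n)=\left(\frac q2-1\right)\|u_n\|^2-\lambda\left(\frac qs-1\right)\int_V|u_n|^s\,d\mu+\eta\left(\frac qr-1\right)\int_V|u_n|^r\,d\mu.$$

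Next I would estimate the right-hand side from below. Because $q>2$, the coefficient $\frac q2-1$ is a positive constant. The term $+\eta(\frac qr-1)\int_V|u_n|^r\,d\mu$ is nonnegative (as $q>r$ and $\eta\geq 0$), so it can simply be dropped. The only potentially harmful term is $-\lambda(\frac qs-1)\int_V|u_n|^s\,d\mu$; here I would use the compact embedding \eqref{embedding} together with \eqref{embeddingconstant} to bound $\int_V|u_n|^s\,d\mu\leq \mu(V)\,\|u_n\|_{sup}^s\leq c^s\|u_n\|^s$, so this term is at least $-\lambda(\frac qs-1)c^s\|u_n\|^s$ with $s<2$. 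Meanwhile the left-hand side is bounded above by $qC_1+C_2\|u_n\|$. Putting these together yields
$$\left(\frac q2-1\right)\|u_n\|^2\leq qC_1+C_2\|u_n\|+\lambda\left(\frac qs-1\right)c^s\|u_n\|^s.$$

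Finally I would observe that this is an inequality of the form $A\|u_n\|^2\leq B+D\|u_n\|+E\|u_n\|^s$ with $A>0$ and $1<s<2$, which forces $(\|u_n\|)$ to be bounded: if some subsequence had $\|u_n\|\to\infty$, dividing by $\|u_n\|^2$ would give $A\leq o(1)$, a contradiction. I do not expect any real obstacle here; the only mild subtlety is keeping track of the convention $|0|^{\ell}\cdot 0:=0$ so that the integrands in \eqref{ef} and \eqref{efderivative} are genuinely continuous and the Euler-type identity $I'_{\lambda,\eta}(u)(u)=\|u\|^2-\lambda\int_V|u|^s\,d\mu+\eta\int_V|u|^r\,d\mu-\int_V|u|^q\,d\mu$ holds verbatim, which is immediate once one notes $|u|^{s-2}u\cdot u=|u|^s$ pointwise (and likewise for $r$ and $q$), including at points where $u$ vanishes. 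The choice of the multiplier $q$ is the crucial point: it is the unique exponent that annihilates the superquadratic term, and superquadraticity ($q>2$) is exactly what makes the leftover quadratic term dominate.
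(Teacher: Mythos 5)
Your proof is correct and follows essentially the same route as the paper: the paper rewrites $I_{\lambda,\eta}(u_n)$ as $\bigl(\tfrac12-\tfrac1q\bigr)\|u_n\|^2-\lambda\bigl(\tfrac1s-\tfrac1q\bigr)\int_V|u_n|^s d\mu+\eta\bigl(\tfrac1r-\tfrac1q\bigr)\int_V|u_n|^r d\mu+\tfrac1q I'_{\lambda,\eta}(u_n)(u_n)$, which is exactly your combination $q\,I_{\lambda,\eta}(u_n)-I'_{\lambda,\eta}(u_n)(u_n)$ divided by $q$, and then likewise drops the nonnegative $\eta$-term, bounds the $s$-term via \eqref{embeddingconstant}, and concludes boundedness from the superquadratic dominance since $s<2<q$. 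No gaps; the argument matches the paper's proof in substance.
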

\begin{proof}
Let $d$ be a real number such that $I_{\lambda,\eta}(u_n)\leq d$ and $||I'_{\lambda,\eta}(u_n)||\leq d$ for every index $n$. Relations (\ref{ef})
and (\ref{efderivative}) yield for every index $n$
\begin{eqnarray*}
&&I_{\lambda,\eta}(u_n)=\frac12||u_n||^2-\frac{\lambda}s\int_V|u_n|^sd\mu+\frac\eta r\int_V|u_n|^rd\mu-\frac1q\int_V|u_n|^qd\mu\\
&=&\left(\frac12-\frac1q\right)||u_n||^2-\lambda\left(\frac1s-\frac1q\right)\int_V|u_n|^sd\mu+\eta\left(\frac1r-\frac1q\right)\int_V|u_n|^rd\mu+
\frac1qI'_{\lambda,\eta}(u_n)(u_n).
\end{eqnarray*}
Using (\ref{embeddingconstant}), we get
$$d\geq I_{\lambda,\eta}(u_n)\geq \left(\frac12-\frac1q\right)||u_n||^2-\lambda\left(\frac1s-\frac1q\right)c^s||u_n||^s-\frac{d}q||u_n||.$$
Since
$$\lim_{t\to\infty}\left(\left(\frac12-\frac1q\right)t^2-\lambda\left(\frac1s-\frac1q\right)c^st^s-\frac{d}qt\right)=\infty,$$
we conclude that the sequence $(u_n)$ has to be bounded.
\end{proof}

\begin{proposition}\label{efproperties}
Let $\lambda,\eta\geq0$. The energy functional $I_{\lambda,\eta}\colon H_0^1(V)\to\R$ attached to problem $(P_{\lambda,\eta})$ has the following properties:
\begin{itemize}
\item[{\rm a)}] $I_{\lambda,\eta}$ is a $C^1$-functional.
\item[{\rm b)}] $u\in H_0^1(V)$ is a solution of problem $(P_{\lambda,\eta})$
if and only if $u$ is a critical point of $I_{\lambda,\eta}$.
\item[{\rm c)}] $I_{\lambda,\eta}$ is sequentially weakly lower semicontinuous.
\item[{\rm d)}] $I_{\lambda,\eta}$ satisfies the Palais-Smale condition.
\item[{\rm e)}] $0$ is a local minimum of $I_{\lambda,\eta}$.

\end{itemize}
\end{proposition}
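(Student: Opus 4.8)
The plan is to observe first that $I_{\lambda,\eta}$ is nothing but the energy functional studied in Section~4 attached to the particular nonlinearity $h\colon V\times\R\to\R$ given by $h(x,t)=\lambda|t|^{s-2}t-\eta|t|^{r-2}t+|t|^{q-2}t$, which is continuous because $r,s,q>1$ (so the convention $|0|^{\ell}\cdot0:=0$ yields a genuinely continuous $h$). Its primitive in the second variable is $H(x,t)=\frac{\lambda}{s}|t|^s-\frac{\eta}{r}|t|^r+\frac1q|t|^q$, and indeed $I_{\lambda,\eta}(u)=\frac12||u||^2-\int_V H(x,u(x))\,d\mu$. With this identification, a) and b) follow at once from Proposition~\ref{criticalpoint} applied to this $h$ (noting that $(P)$ becomes $(P_{\lambda,\eta})$ for it), and c) follows at once from Corollary~\ref{Iwslsc}.

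For d) I would argue as follows. If $(u_n)$ is a Palais--Smale sequence, i.e.\ $(I_{\lambda,\eta}(u_n))$ is bounded and $I'_{\lambda,\eta}(u_n)\to0$, then in particular both $(I_{\lambda,\eta}(u_n))$ and $(I'_{\lambda,\eta}(u_n))$ are bounded, so Lemma~\ref{alsoforPS} gives that $(u_n)$ is bounded. Since $I_{\lambda,\eta}$ has the form $\frac12||u||^2-\tilde J(u)$ with $\tilde J(u)=\int_VH(x,u(x))\,d\mu$ a $C^1$-functional of compact derivative (Proposition~\ref{compderivative}), Corollary~\ref{forPS} applies to this bounded sequence and produces a strongly convergent subsequence; that is exactly the Palais--Smale condition.

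The real content is e), for which I want a radius $\rho>0$ with $I_{\lambda,\eta}(u)\ge0=I_{\lambda,\eta}(0)$ for all $u\in B_\rho$. Using $||u||_{sup}\le c||u||$ (see~(\ref{embeddingconstant})) and $\mu(V)=1$, I would split
$$I_{\lambda,\eta}(u)=\Bigl(\tfrac12||u||^2-\tfrac1q\!\int_V|u|^q\,d\mu\Bigr)+\Bigl(\tfrac{\eta}{r}\!\int_V|u|^r\,d\mu-\tfrac{\lambda}{s}\!\int_V|u|^s\,d\mu\Bigr).$$
The first bracket is $\ge\frac12||u||^2-\frac{c^q}{q}||u||^q\ge0$ once $||u||$ is small, since $q>2$. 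For the second bracket the difficulty is that the concave term $-\frac{\lambda}{s}\int_V|u|^s\,d\mu$ decays like $||u||^s$ with $s<2$ and hence is \emph{not} controlled by $\frac12||u||^2$; the point is instead to dominate it by the nonnegative term $\frac{\eta}{r}\int_V|u|^r\,d\mu$ via the pointwise estimate $|u(x)|^s\le||u||_{sup}^{s-r}|u(x)|^r\le(c||u||)^{s-r}|u(x)|^r$, which is valid because $s>r$. Then the second bracket is $\ge\bigl(\frac{\eta}{r}-\frac{\lambda}{s}(c||u||)^{s-r}\bigr)\int_V|u|^r\,d\mu$, which is $\ge0$ as soon as $||u||$ is small enough that $\frac{\lambda}{s}(c||u||)^{s-r}\le\frac{\eta}{r}$ (and for $\lambda=0$ the bracket is trivially nonnegative). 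Choosing $\rho>0$ small enough to meet both smallness requirements finishes the proof. I expect precisely this last estimate — recognizing that the $s$-term must be played off against the $r$-term rather than against the Hilbert norm — to be the only step that is not a direct invocation of the results already established.
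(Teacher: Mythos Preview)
Your proof is correct and follows the paper's approach throughout: parts a)--d) are handled identically, and for e) the paper packages the same estimate as the single factorization
\[
I_{\lambda,\eta}(u)=\tfrac12||u||^2+\int_V\Bigl(\tfrac{\eta}{r}-\tfrac{\lambda}{s}|u|^{s-r}-\tfrac{1}{q}|u|^{q-r}\Bigr)|u|^r\,d\mu,
\]
observing that the bracket is positive near $0$ by continuity --- which is exactly your pointwise comparison $|u|^s\le||u||_{sup}^{\,s-r}|u|^r$ together with your control of the $q$-term, just not split into two pieces. (Both your argument and the paper's tacitly require $\eta>0$ in e); for $\eta=0$ and $\lambda>0$ the origin is in fact not a local minimum, so the hypothesis $\lambda,\eta\ge0$ is slightly too permissive for that item.)
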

\begin{proof}
The assertions a) and b) follow from Proposition \ref{criticalpoint}, while c) is a consequence of
Corollary \ref{Iwslsc}.

d) Consider a sequence
$(u_n)$ in $H_0^1(V)$ such that $(I_{\lambda,\eta}(u_n))$ is bounded
and such that $(I'(u_n))$ converges to $0$. By Lemma \ref{alsoforPS}
we know that $(u_n)$ is bounded, thus, in view of Corollary
\ref{forPS}, the sequence $(u_n)$ contains a convergent subsequence.
Hence $I_{\lambda,\eta}$ satisfies the Palais-Smale condition.

e) We know from (\ref{ef}) that for every $u\in H_0^1(V)$
$$I_{\lambda,\eta}(u)=\frac12||u||^2+\int_V\left(\frac\eta r-\frac{\lambda}s|u|^{s-r}-\frac1q|u|^{q-r}\right)|u|^rd\mu.$$
Let $h\colon\R\to\R$ be defined by $h(t)=\frac\eta r-\frac\lambda s|t|^{s-r}-\frac1q|t|^{q-r}$. Since $h(0)=\frac\eta r>0$ and since $h$ is continuous,
there exists $\delta>0$ such that $h(t)>0$ for every $t\in]-\delta,\delta[$. Put $r:=\frac\delta c$. If $u\in B_r$, then (\ref{embeddingconstant})
implies $||u||_{sup}\leq c||u||<\delta$, hence $u(x)\in]-\delta,\delta[$ for every $x\in V$. It follows that
$I_{\lambda,\eta}\geq0=I_{\lambda,\eta}(0),\hbox{ for every } u\in B_r,$
thus $0$ is a local minimum of $I_{\lambda,\eta}$.
\end{proof}

\begin{lemma}\label{ulambda}
If $\lambda>0$, then there exists a nonzero element $u_\lambda\in H_0^1(V)$ satisfying the following equality
\begin{equation}\label{ulambdaformula}
||u_\lambda||^2=\lambda\int_V|u_\lambda|^sd\mu.
\end{equation}
In particular, the inequality
\begin{equation}\label{ulambdainequality}
||u_\lambda||\leq(\lambda c^s)^{\frac1{2-s}}
\end{equation}
holds.
\end{lemma}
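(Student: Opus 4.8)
The plan is to use a one-dimensional scaling (fibering) argument along a fixed ray in $H_0^1(V)$. Since $H_0^1(V)$ is dense in the infinite-dimensional space $L^2(V,\mu)$, it contains some $w\neq0$ (for instance one may take a function as in the proof of Theorem \ref{main1}). The function $|w|^s$ is continuous and nonnegative on $V$ and not identically zero, so there is $\varepsilon>0$ for which $\{x\in V\mid|w(x)|>\varepsilon\}$ is a nonempty open subset of $V$; by \eqref{support} this set has positive $\mu$-measure, whence
$$\int_V|w|^s\,d\mu\ \geq\ \varepsilon^s\,\mu\bigl(\{x\in V\mid|w(x)|>\varepsilon\}\bigr)\ >\ 0.$$
In particular $||w||^2>0$ and $\int_V|w|^s\,d\mu>0$.

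For $t>0$ set $u=tw$. Then the desired identity $||u||^2=\lambda\int_V|u|^s\,d\mu$ becomes $t^2||w||^2=\lambda t^s\int_V|w|^s\,d\mu$, i.e., after dividing by $t^s>0$,
$$t^{2-s}=\lambda\,\frac{\int_V|w|^s\,d\mu}{||w||^2}.$$
Since $1<s<2$, the right-hand side is a positive real number and $2-s>0$, so this equation has a (unique) positive solution $t_\lambda$. Putting $u_\lambda:=t_\lambda w$ yields a nonzero element of $H_0^1(V)$ satisfying \eqref{ulambdaformula}.

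To get \eqref{ulambdainequality} the explicit value of $t_\lambda$ is not needed: starting from \eqref{ulambdaformula} and using $\mu(V)=1$ together with \eqref{embeddingconstant} we estimate
$$||u_\lambda||^2=\lambda\int_V|u_\lambda|^s\,d\mu\leq\lambda\,||u_\lambda||_{sup}^s\,\mu(V)\leq\lambda c^s||u_\lambda||^s.$$
Dividing by $||u_\lambda||^s>0$ gives $||u_\lambda||^{2-s}\leq\lambda c^s$, and since $2-s>0$ this is precisely \eqref{ulambdainequality}.

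There is no genuinely hard step here; the only point using the structure of the Sierpinski gasket is the strict positivity $\int_V|w|^s\,d\mu>0$, which rests on property \eqref{support} (equivalently, on the support of $\mu$ being all of $V$). An alternative, slightly less direct route would be to minimize the functional $u\mapsto\frac12||u||^2-\frac\lambda s\int_V|u|^s\,d\mu$, which is coercive because $s<2$ and sequentially weakly lower semicontinuous by Corollary \ref{Iwslsc}, via Proposition \ref{WII}; its infimum is strictly negative (test with a small multiple of $w$), hence attained at some $u_\lambda\neq0$, and the Euler--Lagrange identity evaluated at $u_\lambda$ yields \eqref{ulambdaformula}. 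The scaling argument above is shorter, and is the one I would write up.
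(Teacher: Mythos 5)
Your argument is correct, but it takes a genuinely different route from the paper's. The paper constructs $u_\lambda$ variationally: it minimizes $\psi_\lambda(u)=\frac12||u||^2-\frac\lambda s\int_V|u|^s d\mu$ over all of $H_0^1(V)$, using coercivity (since $s<2$), sequential weak lower semicontinuity (Corollary \ref{Iwslsc}) and Proposition \ref{WII}; the resulting global minimizer is a critical point, so testing ${\mathcal W}(u_\lambda,v)=\lambda\int_V|u_\lambda|^{s-2}u_\lambda v\,d\mu$ with $v=u_\lambda$ gives (\ref{ulambdaformula}), and nontriviality follows because $\psi_\lambda(tv)<0$ for small $t>0$ and a fixed $v\neq0$, again via (\ref{support}). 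This is precisely the ``alternative, slightly less direct route'' you sketch at the end. Your main argument --- scaling a fixed nonzero $w$ along the ray $t\mapsto tw$ and solving $t^{2-s}=\lambda\int_V|w|^s d\mu\,/\,||w||^2$, which has a unique positive solution because $1<s<2$ and the right-hand side is positive by (\ref{support}) and the continuity of $w$ --- is more elementary: it avoids the direct method and any differentiation of $\psi_\lambda$, and it exhibits $u_\lambda$ explicitly. What the paper's construction yields in addition is that $u_\lambda$ is a global minimizer of $\psi_\lambda$ with negative energy; but this extra information is never exploited later, since the proof of Theorem \ref{main2} uses only (\ref{ulambdaformula}), (\ref{ulambdainequality}) and $u_\lambda\neq0$, all of which your construction provides. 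Your deduction of (\ref{ulambdainequality}) from (\ref{ulambdaformula}), using $\mu(V)=1$ and (\ref{embeddingconstant}) and dividing by $||u_\lambda||^s>0$, coincides with the paper's.
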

\begin{proof}
Let $\psi_\lambda\colon H_0^1(V)\to\R$ be defined by $\psi_\lambda(u)=\frac12||u||^2-\frac\lambda s\int_V|u|^sd\mu$. By (\ref{embeddingconstant}),
the following relations hold for every $u\in H_0^1(V)$
$$\psi_\lambda(u)\geq \frac12||u||^2-\frac\lambda s c^s||u||^s=||u||^2\left(\frac12-\frac\lambda sc^s||u||^{s-2}\right).$$
Since $s<2$, we have that $\displaystyle{\lim_{||u||\to\infty}}\psi_\lambda(u)=\infty$, i.e., $\psi_\lambda$ is coercive. From Corollary \ref{Iwslsc}
we know that $\psi_\lambda$ is
sequentially weakly lower semicontinuous, thus, by Proposition \ref{WII}, $\psi_\lambda$ admits at least one global minimum which we denote by $u_\lambda$.
Since $u_\lambda$ is a critical point of $\psi_\lambda$, we have that $\psi_\lambda'(u_\lambda)(v)=0$, for every $v\in H_0^1(V)$, i.e., in view
of Proposition \ref{criticalpoint},
$${\mathcal W}(u_\lambda,v)-\lambda\int_V|u_\lambda|^{s-2}u_\lambda vd\mu=0, \hbox{ for every } v\in H_0^1(V).$$
If we take $v=u_\lambda$ in the above equality, we get (\ref{ulambdaformula}). Inequality (\ref{embeddingconstant}) then yields
(\ref{ulambdainequality}).

We finally show that $u_\lambda$ is nonzero. For this fix an arbitrary nonzero element $v\in H_0^1(V)$. According to (\ref{support}), we have that
$\int_V|v|^sd\mu>0$. On the other hand, the following equality holds for every $t>0$
$$\psi_t(tv)=t^s\left(\frac12t^{2-s}||v||^2-\frac\lambda s\int_V|v|^sd\mu\right).$$
For $t$ sufficiently small (more exactly, for $0<t^{2-s}<\frac{2\lambda}{s||v||^2}\int_V|v|^sd\mu$) we thus have that
$$\psi_\lambda(u_\lambda)\leq \psi_\lambda(tv)<0=\psi_\lambda(0),$$
hence $u_\lambda$ is nonzero.
\end{proof}
Now we can state the main result of this section.

\begin{theorem}\label{main2}
There exists a real number $\Lambda>0$ with the following property: For every $\lambda\in]0,\Lambda[$ there exists $\eta_\lambda>0$ such that
for each $\eta\in[0, \eta_\lambda[$ the problem $(P_{\lambda,\eta})$ has at least three nonzero solutions.
\end{theorem}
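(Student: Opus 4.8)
\emph{Strategy.} The key structural feature is that, for every $\lambda,\eta\ge 0$, the energy functional $I_{\lambda,\eta}$ is even (each $\int_V|u|^p\,d\mu$ is even). The plan is to produce the three solutions as: a nonzero local minimizer $u_1$ of strictly negative energy, its mirror image $-u_1$, and a third solution $u_3$ of nonnegative energy obtained by a mountain pass argument. Everything hinges on minimizing $I_{\lambda,\eta}$ over a closed ball $\overline{B_R}$ whose radius is chosen of the form $R=K\lambda^{1/(2-s)}$ with $K$ a large absolute constant; this scaling is exactly what reconciles the two competing demands, namely that $\overline{B_R}$ contain a negative-energy point furnished by Lemma \ref{ulambda}, while on $\partial B_R$ the term $\frac12\|u\|^2$ still dominates the sub-quadratic term $-\frac\lambda s\int_V|u|^s\,d\mu$.

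\emph{The two negative-energy solutions.} I would take the nonzero $u_\lambda$ of Lemma \ref{ulambda} as a test point. Using (\ref{ulambdaformula}) one gets $I_{\lambda,\eta}(u_\lambda)=-\frac{2-s}{2s}\|u_\lambda\|^2+\frac\eta r\int_V|u_\lambda|^r\,d\mu-\frac1q\int_V|u_\lambda|^q\,d\mu$, hence by (\ref{embeddingconstant}) and $\mu(V)=1$
$$I_{\lambda,\eta}(u_\lambda)\le -\frac{2-s}{2s}\|u_\lambda\|^2+\frac\eta r\,c^r\|u_\lambda\|^r,$$
and the right-hand side is negative as soon as $\eta<\eta_\lambda:=\frac{r(2-s)}{2s c^r}\|u_\lambda\|^{2-r}$, a positive number because $u_\lambda\ne 0$ and $r<2$. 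On $\partial B_R$ the estimates $\int_V|u|^s\,d\mu\le c^s\|u\|^s$, $\int_V|u|^q\,d\mu\le c^q\|u\|^q$ give $I_{\lambda,\eta}(u)\ge\frac12R^2-\frac{c^s}{s}\lambda R^s-\frac{c^q}{q}R^q$; substituting $R=K\lambda^{1/(2-s)}$ turns the middle term into the $\lambda$-free quantity $\frac{c^s}{s}K^{s-2}$, which is $\le\frac14$ once $K$ is large, while the last term is $\le\frac14$ once $\lambda$ is below a suitable $\Lambda>0$. (I would also take $K\ge c^{s/(2-s)}$, so that $\|u_\lambda\|\le R$ by (\ref{ulambdainequality}).) Thus $I_{\lambda,\eta}\ge 0$ on $\partial B_R$ for $\lambda\in(0,\Lambda)$. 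Since $H_0^1(V)$ is reflexive, $\overline{B_R}$ is bounded, convex and closed, hence sequentially weakly closed, and $I_{\lambda,\eta}$ is sequentially weakly lower semicontinuous (Proposition \ref{efproperties}(c)), Proposition \ref{WI} yields a minimizer $u_1$ of $I_{\lambda,\eta}$ over $\overline{B_R}$. Because $I_{\lambda,\eta}(u_1)\le I_{\lambda,\eta}(u_\lambda)<0\le I_{\lambda,\eta}|_{\partial B_R}$, the point $u_1$ lies in the open ball $B_R$, hence is a local minimum of $I_{\lambda,\eta}$ on $H_0^1(V)$, and therefore a nonzero solution of $(P_{\lambda,\eta})$ by Remark \ref{useful} and Proposition \ref{efproperties}(b). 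By evenness, $-u_1$ is a second solution, and $u_1,-u_1,0$ are pairwise distinct.

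\emph{The mountain pass solution.} Recall $I_{\lambda,\eta}$ satisfies the Palais--Smale condition (Proposition \ref{efproperties}(d)), $I_{\lambda,\eta}(0)=0$, and $I_{\lambda,\eta}(tv)\to-\infty$ as $t\to\infty$ for any fixed $v\ne 0$ (the term $-\frac1q\int_V|u|^q\,d\mu$ dominates). If $\eta>0$, the computation behind Proposition \ref{efproperties}(e) in fact shows $I_{\lambda,\eta}(u)\ge\frac12\|u\|^2$ for $\|u\|$ small, hence there are $\rho>0$ and $\alpha:=\frac12\rho^2>0$ with $I_{\lambda,\eta}\ge\alpha$ on $\partial B_\rho$; choosing $e$ with $\|e\|>\rho$ and $I_{\lambda,\eta}(e)<0$, Theorem \ref{MPAR} gives a solution $u_3$ with $I_{\lambda,\eta}(u_3)\ge\alpha>0$, so $u_3\ne 0$ and, comparing with $I_{\lambda,\eta}(\pm u_1)<0$, also $u_3\ne\pm u_1$. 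If $\eta=0$, this strict geometry at the origin is no longer available, but $I_{\lambda,0}\ge 0$ on $\partial B_R$; choosing $e$ with $\|e\|>R$ and $I_{\lambda,0}(e)<0$, Theorem \ref{MPPS} with $\rho:=R$ gives a critical value $\kappa\ge 0$. If $\kappa>0$, the associated solution $u_3$ has positive energy and hence $u_3\ne 0,\pm u_1$ as before; if $\kappa=0$, there is a solution $u_3\in\partial B_R$, and since $u_1$ lies in the open ball $B_R$ (so $\|u_1\|<R=\|u_3\|$), once more $u_3\ne 0,\pm u_1$. In every case $u_1,-u_1,u_3$ are three pairwise distinct nonzero solutions of $(P_{\lambda,\eta})$, for all $\lambda\in(0,\Lambda)$ and $\eta\in[0,\eta_\lambda)$.

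\emph{Main obstacle.} The one genuinely non-routine step is the joint calibration of $R$ and $\Lambda$ in the second part: $R$ must be large enough relative to $\lambda$ to enclose the negative-energy point $u_\lambda$, yet small enough that $\frac12\|u\|^2$ outweighs \emph{both} the sub-quadratic term $-\frac\lambda s\int_V|u|^s\,d\mu$ and the super-quadratic term $-\frac1q\int_V|u|^q\,d\mu$ on $\partial B_R$. The choice $R=K\lambda^{1/(2-s)}$ is precisely what collapses the sub-quadratic obstruction to a $\lambda$-independent constant, absorbed by enlarging $K$, after which $\Lambda$ is picked to kill the super-quadratic one. A subordinate point is that the perturbation $+\frac\eta r\int_V|u|^r\,d\mu$ must not destroy the inequality $I_{\lambda,\eta}(u_\lambda)<0$, which is why $\eta_\lambda$ is allowed to depend on $\lambda$ through $\|u_\lambda\|$ (for which Lemma \ref{ulambda} supplies only an upper bound).
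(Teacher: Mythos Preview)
Your proof is correct but follows a genuinely different route from the paper's.

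The paper fixes $R=c^{-q/(q-2)}$ \emph{independently} of $\lambda$, sets $m=\frac12(\frac12-\frac1q)R^2$, and chooses $\Lambda$ small enough that $I_{\lambda,\eta}|_{\partial B_R}>m$ and $\|u_\lambda\|<R$. It then produces three solutions at three separated energy levels: $\widetilde u_1$ by minimizing over $\overline{B_R}$ (energy $<0$); $\widetilde u_2$ by the Pucci--Serrin mountain pass, Theorem~\ref{MPPS}, using the small sphere from Proposition~\ref{efproperties}(e) and endpoint $u_\lambda$, together with the estimate $\max_{t\in[0,1]}I_{\lambda,\eta}(tu_\lambda)<m$ (energy in $[0,m)$); and $\widetilde u_3$ by Ambrosetti--Rabinowitz, Theorem~\ref{MPAR}, through the large sphere $\partial B_R$ (energy $\ge m$). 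The three are distinguished purely by their critical values.

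You instead exploit that $I_{\lambda,\eta}$ is even: you take $R=K\lambda^{1/(2-s)}$ depending on $\lambda$, minimize once to get $u_1$, obtain $-u_1$ for free by symmetry, and run a \emph{single} mountain pass for $u_3$. This is more economical---one mountain pass instead of two, and no need for the auxiliary bound on $\max_{t\in[0,1]}I_{\lambda,\eta}(tu_\lambda)$---and your separate treatment of the case $\eta=0$ via Pucci--Serrin on $\partial B_R$ is in fact tidier than the paper's Step~2, which invokes Proposition~\ref{efproperties}(e) whose proof actually requires $\eta>0$. The trade-off is that your argument rests entirely on the evenness of the specific nonlinearity in $(P_{\lambda,\eta})$ and would not survive an asymmetric perturbation, whereas the paper's three-level scheme makes no use of symmetry and so transports to non-even problems of the same shape; it also yields the finer information that the three solutions sit at three distinct energy levels.
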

\begin{proof}
Let $R:=c^{-\frac q{q-2}}$, so $c^qR^q=R^2$. Put $m:=\frac12(\frac12-\frac1q)R^2$. Obviously $m>0$.
For every $\lambda,\eta\geq0$ and every $u\in H_0^1(V)$ with $||u||=R$ we then have, according to (\ref{embeddingconstant}) and (\ref{ef}),
\begin{equation}\label{choiceLambda}
I_{\lambda,\eta}(u)\geq \frac12R^2-\frac\lambda sc^sR^s-\frac1qc^qR^q=\left(\frac12-\frac1q\right)R^2-\frac\lambda sc^sR^s=2m-\frac\lambda sc^sR^s.
\end{equation}
Consider
$$\Lambda:=\min\left\{\frac{ms}{c^sR^s},\frac{R^{2-s}}{c^s}\right\}.$$
Fix now an arbitrary $\lambda\in]0,\Lambda[$. From (\ref{choiceLambda}) we then get that
\begin{equation}\label{tromp}
\inf_{||u||=R}I_{\lambda,\eta}(u)>m,\hbox{ for every }\eta\geq0.
\end{equation}
Also, by (\ref{ulambdainequality}) and the choice of $\Lambda$, we have that
\begin{equation}\label{snorry}
||u_\lambda||<R.
\end{equation}
By (\ref{ef}) and (\ref{ulambdaformula}), the following equality holds for every $\eta\geq0$
$$I_{\lambda,\eta}(u_\lambda)=||u_\lambda||^2\left(\frac12-\frac1s\right)+\frac\eta r\int_V|u_\lambda|^rd\mu-\frac1q\int_v|u_\lambda|^qd\mu.$$
Since $u_\lambda$ is nonzero, (\ref{support}) implies that $\int_V|u_\lambda|^rd\mu>0$. Put
$$\eta_{1,\lambda}:=\frac{r\left(||u_\lambda||^2\left(\frac1s-\frac12\right)+\frac1q\int_v|u_\lambda|^qd\mu\right)}{\int_V|u_\lambda|^rd\mu}.$$
Then $\eta_{1,\lambda}>0$ and
\begin{equation}\label{falf}
I_{\lambda,\eta}(u_\lambda)<0,\hbox{ for every }\eta\in[0,\eta_{1,\lambda}[.
\end{equation}
For $\eta\geq 0$ we have, by (\ref{ef}) and (\ref{ulambdaformula}),
\begin{equation}\label{forlimes}
I_{\lambda,\eta}(tu_\lambda)=\frac12t^2||u_\lambda||^2-\frac{t^s}s||u_\lambda||^2+\frac{\eta t^r}r\int_V|u_\lambda|^rd\mu-\frac{t^q}q\int_V|u_\lambda|^qd\mu,
\hbox{ for }t\geq0.
\end{equation}
Since $s<2$ we have $\frac{t^2}2\leq\frac{t^s}s$, for every $t\in[0,1]$. Thus
$$I_{\lambda,\eta}(tu_\lambda)\leq \frac{\eta t^r}r\int_V|u_\lambda|^rd\mu,\hbox { for }\eta\geq0,\hbox{ and }t\in[0,1].$$
Let
$$\eta_{2,\lambda}:=\frac{mr}{\int_V|u_\lambda|^rd\mu}.$$
Then $\eta_{2,\lambda}>0$ and
\begin{equation}\label{pleosc}
I_{\lambda,\eta}(tu_\lambda)<m,\hbox{ for every }\eta\in[0,\eta_{1,\lambda}[\hbox{ and every }t\in[0,1].
\end{equation}
Define $\eta_\lambda:=\min\{\eta_{1,\lambda},\eta_{2,\lambda}\}$ and pick an arbitrary $\eta\in[0,\eta_\lambda[$. Relation (\ref{falf}) implies
\begin{equation}\label{plitsch}
I_{\lambda,\eta}(u_\lambda)<0,
\end{equation}
while relation (\ref{pleosc}) yields
\begin{equation}\label{koala}
I_{\lambda,\eta}(tu_\lambda)<m,\hbox{ for every }t\in[0,1].
\end{equation}
We next proceed in three steps to get three nonzero solutions of problem $(P_{\lambda,\eta})$. From the assertions a) and d) of Proposition
\ref{efproperties} we know that
$I_{\lambda,\eta}$ is a $C^1$-functional which satisfies the Palais-Smale condition.

{\it The first step}: The closed ball $\overline B_R$ is weakly closed (being convex and closed in the strong topology), hence it is also sequentially
weakly closed. By assertion c) of Proposition \ref{efproperties} the restriction $I_{\lambda,\eta}|_{\overline B_R}$ is sequentially weakly lower semicontinuous.
Proposition \ref{WI} implies then that $I_{\lambda,\eta}|_{\overline B_R}$ has at least one minimum point $\widetilde u_1$. By (\ref{snorry}) we know that
$u_\lambda\in B_R$, thus $I_{\lambda,\eta}(\widetilde u_1)\leq I_{\lambda,\eta}(u_\lambda)$. Since $I_{\lambda,\eta}(u_\lambda)<0=I_{\lambda,\eta}(0)$
(by inequality
\ref{plitsch}), we conclude that $\widetilde u_1$ is nonzero. Also, in view of (\ref{tromp}), we have that $\widetilde u_1\in B_R$. This shows that
$\widetilde u_1$ is a local minimum point, hence a critical point, of $I_{\lambda,\eta}$.

{\it The second step}: From assertion e) of Proposition \ref{efproperties} we get a positive real $r<||u_\lambda||$ such that
$0=I_{\lambda,\eta}(0)\leq I_{\lambda,\eta}(u)$, for every $u\in \overline B_r$. From (\ref{plitsch}) we know that  $I_{\lambda,\eta}(u_\lambda)<0$, so
Theorem \ref{MPPS} guarantees the existence of a nonzero critical
point $\widetilde u_2$ of $I_{\lambda,\eta}$ such that $I_{\lambda,\eta}(\widetilde u_2)\geq0$ and
$$I_{\lambda,\eta}(\widetilde u_2)=\inf_{g\in\Gamma}\max_{t\in[0,1]}I_{\lambda,\eta}(g(t)),$$
where
$$\Gamma:=\{g\colon[0,1]\to H_0^1(V)\mid\ g \hbox{ continuous},\ g(0)=0,\ g(1)=u_\lambda\}.$$
Choosing $\widetilde g\colon[0,1]\to H_0^1(V)$, $\widetilde g(t)=tu_\lambda$, we get
in virtue of (\ref{koala})
$$I_{\lambda,\eta}(\widetilde u_2)\leq \max_{t\in[0,1]}I_{\lambda,\eta}(tu_\lambda)<m.$$

{\it The third step}: Since $\int_V|u_\lambda|^qd\mu>0$, relation (\ref{forlimes}) yields $\displaystyle{\lim_{t\to\infty}I_{\lambda,\eta}(tu_\lambda)=-\infty}$.
Thus there is a positive real $t$ such that $I_{\lambda,\eta}(tu_\lambda)<0$ and $t||u_\lambda||>R$. Taking into account (\ref{tromp}), Theorem \ref{MPAR}
implies the existence of a critical point $\widetilde u_3$ of $I_{\lambda,\eta}$ such that $I_{\lambda,\eta}(\widetilde u_3)\geq m$.

Thus $\widetilde u_1, \widetilde u_2,$ and $\widetilde u_3$ are nonzero critical points of $I_{\lambda,\eta}$ satisfying the inequalities
$$I_{\lambda,\eta}(\widetilde u_1)<0\leq I_{\lambda,\eta}(\widetilde u_2)<m\leq I_{\lambda,\eta}(\widetilde u_3).$$
Hence $\widetilde u_1, \widetilde u_2, \widetilde u_3$ are pairwise distinct. So assertion b) of Proposition
\ref{efproperties} finally implies that problem $(P_{\lambda,\eta})$ has at least three nonzero solutions.
\end{proof}

\begin{remark}
As already mentioned in the introduction, in \cite{An} there is investigated the analogous of problem $(P_{\lambda,\eta})$ in case of the $p$-Laplacian.
Since $H_0^1(V)$ satisfies (\ref{embedding}), our situation differs from that one in \cite{An}, where the corresponding result to Theorem \ref{main2}
has been obtained only for subcritical values of $q$, while for the critical value or for supercritical values of $q$ one can guarantee only the
existence of at least two nonzero solutions (see Theorem 1, respectively, Theorem 2 in \cite{An}). Note that our Theorem \ref{main2} holds for every $q>2$.
\end{remark}

\subsection*{Acknowledgements}
Varga was fully supported by the grant CNCSIS PCCE-55/2008
``Sisteme diferen\c{t}iale \^\i n analiza neliniar\u a \c si aplica\c tii. Breckner
was supported by the grant CNMP PN-II-P4-11-020/2007. Repov\v s was supported by SRA grants P1-0292-0101, J1-2057-0101 and J1-4144-0101.


\begin{thebibliography}{99}
\bibitem{An}
G. Anello, Multiple solutions for an elliptic boundary value problem involving combined nonlinearities, 
{\it Mathematical and Computer Modelling} {\bf 52}:1–2 (2010), 400--408.

\bibitem{ArCa}
D. Arcoya and J. Carmona, A nondifferentiable extension of a theorem of Pucci and Serrin and applications, {\it J. Differential Equations} {\bf 235}:2 (2007), 683--700.

\bibitem{BoCa}
G. Bonanno and P. Candito, Non-differentiable functionals and applications to elliptic problems with discontinuous nonlinearities, {\it J. Differential Equations} {\bf 244}:12 (2008), 3031--3059.

\bibitem{Breradvar}
B. E. Breckner, V. R\u{a}dulescu, and Cs. Varga, Infinitely many solutions for the Dirichlet problem on the Sierpinski gasket, {\it Analysis and Applications} {\bf 9}:3 (2011), 235--248.


\bibitem{Fa99}
K. J. Falconer,  Semilinear PDEs on self-similar fractals,
{\it Commun. Math. Phys.} {\bf 206} (1999), 235--245.

\bibitem{Fa}
K. J. Falconer, {\it Fractal Geometry: Mathematical Foundations and
Applications}, 2nd edition, John Wiley $\&$ Sons, 2003.

\bibitem{FaHu}
K. J. Falconer and J. Hu,  Non-linear elliptical equations on
the Sierpinski gasket, {\it J. Math. Anal. Appl.} {\bf 240} (1999), 552--573.

\bibitem{Hu}
J. Hu,  Multiple solutions for a class of nonlinear elliptic
equations on the Sierpinski gasket, {\it Sci. China Ser. A} {\bf 47}:5 (2004), 772--786.

\bibitem{HuaZh}
C. Hua and H. Zhenya,  Semilinear elliptic equations on fractal
sets, {\it Acta Math. Sci. Ser. B Engl. Ed.} {\bf 29 B} (2) (2009), 232--242.


\bibitem{Ko}
S. M. Kozlov, Harmonization and homogenization on fractals,
{\it Commun. Math. Phys.} {\bf 153} (1993), 339--357.

\bibitem{KrMaVa}
A. Krist\'aly, W. Marzantowicz, and Cs. Varga, A non-smooth three critical points theorem with applications in differential inclusions, {\it J. Global Optim.}
{\bf 46} (2010), 49--62.


\bibitem{MaMo}
S. A. Marano and D. Motreanu, On a three critical points theorem for non-differentiable functions and applications to nonlinear boundary value problems, {\it Nonlinear
Analysis} {\bf 48} (2002), 37--52.

\bibitem{PS}
P. Pucci and J. Serrin, A mountain pass theorem, {\it J. Differential Equations} {\bf 60} (1985), 142--149.

\bibitem{Ra}
P. H. Rabinowitz, {\it Minimax Methods in Critical Point Theory with Applications to Differential Equations}, Providence, Rhode Island, 1986.


\bibitem{Ri00}
B. Ricceri, On a three critical points theorem, {\it Arch. Math.} ({\it Basel}) {\bf 75} (2000), 220--226.

\bibitem{Ri09r}
B. Ricceri, A three critical points theorem revisited, {\it Nonlinear Anal.} {\bf 70} (2009), 3084--3089.

\bibitem{Ri09f}
B. Ricceri, A further three critical points theorem, {\it Nonlinear Anal.} {\bf 71} (2009), 4151--4157.

\bibitem{stri}
R. S. Strichartz, Analysis on fractals, {\it
Notices Amer. Math. Soc.} {\bf 46} (1999), no. 10, 1199--1208.

\bibitem{stripaper}
R. S. Strichartz, Solvability for differential equations on fractals, {\it J. Anal. Math.} {\bf 96}  (2005), 247--267.

\bibitem{stribook}
R. S. Strichartz, {\it Differential Equations on Fractals. A Tutorial}, Princeton University Press, Princeton, NJ, 2006.

\bibitem{Ze}
E. Zeidler, Nonlinear Functional Analysis and its Applications, vol. III, Springer-Verlag, 1985.

\bibitem{Zevol2}
E. Zeidler, Nonlinear Functional Analysis and its Applications, vol. II/A, Springer-Verlag, 1990.


\end{thebibliography}
\end{document}